\newtheorem{thm}{Theorem}
\newtheorem{cor}[thm]{Corollary}
\newdefinition{rmk}{Remark}
\newproof{proof}{Proof}
\renewcommand{\Re}{\mathop{\rm Re}} 
\newcommand{\field}[1]{\mathbb{#1}}
\newcommand{\C}{\field{C}} 
\newcommand{\N}{\field{N}}
\newcommand{\R}{\field{R}}  
\newcommand{\Ord}[1]{\mathcal{O}\left( #1 \right)}
\begin{document}  
\begin{frontmatter}
 
\title{Computation of 2D Fourier transforms and diffraction integrals using Gaussian radial basis functions}



\author[address1,address2]{A. Mart\'{\i}nez--Finkelshtein\corref{cor1}}
\ead{andrei@ual.es}
\cortext[cor1]{Corresponding author.}
\author[address1]{D. Ramos-L\'opez}
\ead{drl012@ual.es}
\author[address3]{D.~R.~Iskander}
\ead{robert.iskander@pwr.edu.pl}
\address[address1]{Department of Mathematics,
University of Almer\'{\i}a, Spain}
\address[address2]{Instituto Carlos I de F\'{\i}sica Te\'{o}rica y Computacional,
Granada University, Spain}
\address[address3]{Department of Biomedical Engineering, Wroclaw University of Technology, Wroclaw, Poland}


\begin{abstract}
We implement an efficient method of computation of two dimensional Fourier-type integrals based on approximation of the integrand by Gaussian radial basis functions, which constitute a standard tool in approximation theory. As a result, we obtain a rapidly converging series expansion for the integrals, allowing for their accurate calculation. We apply this idea to the evaluation of diffraction integrals, used for the computation of the through-focus characteristics of an optical system. We implement this method and compare its performance in terms of complexity, accuracy and execution time with several alternative approaches, especially with the extended Nijboer-Zernike theory, which is also outlined in the text for the reader's convenience. The proposed method yields a reliable and fast scheme for simultaneous evaluation of such kind of integrals for several values of the defocus parameter, as required in the characterization of the  through-focus optics. 
\end{abstract}

\begin{keyword}
2D Fourier transform \sep Diffraction integrals \sep Radial Basis Functions \sep Extended Nijboer--Zernike theory \sep Through-focus characteristics of an optical system
\end{keyword}

\end{frontmatter}

\section{Introduction}

The importance of the 2D Fourier transform in mathematical imaging and vision is difficult to overestimate. For a function $g$ given  on $\R^2$ in polar coordinates $(\rho, \theta)$ it reduces to calculating the integrals of the form  
\begin{equation}\label{fourier1}
F(r, \phi)= \frac{1}{\pi} \int_{0}^\infty\int_0^{2\pi} g(\rho  ,   \theta) e^{2\pi i r \rho \cos (\theta- \phi) } \rho d\rho d\theta, 
\end{equation} 
in which the Fourier transform is expressed in polar coordinates $(r,\phi)$. For instance, the impulse response of an optical system (referred to as the point-spread function (PSF)) can be defined in terms of diffraction integrals of the form~\eqref{fourier1}. The PSF uniquely defines a linear optical system and it is usually calculated for a single value of the focus parameter~\cite{Goodman_96}.

Calculating through-focus characteristics of an optical system has a wide variety of important applications including phase-diverse phase retrieval~\cite{Dean_Bowers03,Dean_etal06}, wavefront sensing~\cite{Thurman11}, aberration retrieval in lithography, microscopy, and extreme ultraviolet light optics~\cite{Dirksen_etal04,Dirksen_etal05}, as well as in physiological optics, where such calculation can be used to assess the efficacy of intraocular lenses~\cite{Artal_etal95,Marcos_etal2005,Piers_etal2007}, study the depth-of-focus of the human eye~\cite{Atchison2012,Yi_etal2010} or determine optimal pupil size in retinal imaging instruments such as the confocal scanning laser ophthalmoscope~\cite{Donnelly_Roorda03}. 

Another rapidly developing field in imaging is the digital holography~\cite{2,3,Schnars,1}, which finds applications in the quantitative visualization of phase objects such as living cells using microscopic objectives in a digital holographic set-up. This technology of acquiring and processing holographic measurement data usually is comprised of two steps, the recording of an interference pattern produced by a real object on a CCD, followed by the numerical reconstruction of the hologram by simulating the back-propagation of the image. Fourier-type integrals are an essential part of several algorithms used in digital holography, for instance when calculating the Fresnel Transforms. 

In this context, the numerical algorithms used to calculate the 2D  Fourier transform play a central role. 
Unfortunately, such integrals can be explicitly evaluated in terms of standard special functions only in a limited number of situations. On the other hand,  purely numerical procedures for these evaluations are computationally cumbersome and require substantial computational resources. Thus, the most popular alternative are the semi-analytic methods that use an approximation of the integrand with functions from a certain class, for which some forms of closed expressions are available. This is the paradigm of the extended Nijboer--Zernike (ENZ) approach, explained briefly in Section~\ref{sec:ENZ}.
The ENZ theory constitutes an important step forward in comparison to the direct quadrature integration. Its initial implementation \cite{Braat_etal2002,Janssen2002,Janssen2004} was for circular pupils and for symmetric aberrations containing only the even terms of Zernike polynomial expansion (i.e., only cosine dependence), and presented some limitations such as its poor performance for large numerical apertures. Many of these shortcomings were fixed with the introduction of the vectorial ENZ theory for high numerical apertures \cite{Braat_etal2003}, and in more recent publications \cite{VanHaverPhD,VanHaver,VanHaver2014}.

In this paper we develop an alternative technique for calculating integrals of the form \eqref{fourier1} 
by combining the essence of the ENZ approach with the approximation power of the radial basis functions (RBF), which are standard tools in the approximation theory and numerical analysis, appearing in applications ranging from engineering and artificial intelligence to solutions of partial differential equations, see for example~\cite{Fasshauer}. The Gaussian radial basis functions (GRBF) have been also used in the context of ophthalmic optics \cite{andrei2009,andrei2011,Montoya}. 

The structure of the paper is as follows. In Section~\ref{sec:RBFapprox} we outline some facts from the approximation theory by RBF. The exposition here is very sketchy, and its main purpose is to mention some literature for the reader's convenience. Section~\ref{Fourier_from_GRBF} is devoted to the derivation of the main formulas that can be used for the calculation of general Fourier integrals \eqref{fourier1}. The detailed algorithm is developed in Section~\ref{sec:calculation} for the diffraction integrals defined therein, including its efficient implementation and error estimates.  An assessment of accuracy and efficiency of this procedure is carried out in Section~\ref{sec5}, where it is compared also with the extended Nijboer-Zernike theory (outlined in Section~\ref{sec:ENZ}) and with the standard 2D fast Fourier transform. 

Some of the ideas underlying this approach have been presented in~\cite{RMI2014}. In this work we develop further the algorithmic aspects of this procedure, prove its convergence, discuss its efficient implementation, and  compare its performance with some standard alternatives.

\section{Approximation by Radial Basis Functions} \label{sec:RBFapprox}

Every semi-analytic method for calculating integrals is comprised of two phases: first we must approximate the  integrand (or some of its factors) by a function from a given class, and after that the integral of the approximant is calculated in a closed form. This is the case of the ENZ approach (where the pupil function is approximated by linear combinations of Zernike polynomials, see the details in Section~\ref{sec:ENZ}), as well of the method put forward in this paper, which is based on approximation by a linear combination of Gaussian functions. 

A Gaussian radial basis function (or GRBF),
$$
\Phi(x,y)=e^{-\lambda(x^2+y^2)} 
$$
is a strictly positive definite radial basis function (RBF) in $\R^2$ with the \emph{shape parameter} $\lambda>0$ and center at the origin. As a consequence, any set of data $(a_k,b_k,z_k)$, with $P_k=(a_k, b_k)\in \R^2 $ pairwise distinct, is unisolvent for Lagrange interpolation by a linear combination of shifts $\Phi(x-a_k,y-b_k)$, that we briefly call Gaussian interpolant.  

The theory of approximation by RBF (both the theoretical and computational aspects) has been rather well developed in the last 30 years, motivated by its applications in numerical analysis (mesh-free methods) and neural networks, see e.g.~\cite{Buhman,Fasshauer}.

The accuracy of the Gaussian interpolants and the stability of the corresponding linear system depend on the number of data points and on the shape parameter $\lambda$. For a fixed $\lambda$, as the number of data points increases, the Gaussian interpolant converges to the underlying (sufficiently smooth) function being interpolated at a ``super-spectral'' rate $\mathcal O(e^{-c h^{-2}})$, where $h$ is the measure of a ``typical'' distance between the data points, and $c$ is a positive constant, depending on $\lambda$, see \cite{Schaback,Yoon}. The selection of the appropriate $\lambda$ is important: smaller values of $\lambda$ yield better accuracy of the interpolant but affect significantly the stability of the linear system (the ``trade-off principle''). Nonetheless, methods have been described to handle this situation  \cite{Fasshauer/McCourt,Larsson}.

Least-squares approximation by RBF has been developed in parallel within the artificial intelligence community. For instance, the support vector (SV) machine is a type of learning machine, based on statistical learning theory, which contains RBF networks as special cases. In the RBF case, the SV algorithm automatically determines centers, weights, and threshold that minimize an upper bound on the expected test error \cite{SV,Suykens}. This has connections with other successful approximation schemes, such as the adaptive least squares (see e.g.~\cite[Chapter 21]{Fasshauer}, and also \cite{andrei2011}), the moving least squares~\cite[Chapters 21--27]{Fasshauer}, and others.

The discrete least squares approximation to a smooth function is known to converge with at least a power rate; for some error bounds and implementation issues we refer the reader to \cite[Chapter 8]{Buhman} (see also \cite[Chapter 20]{Fasshauer}).

\section{Fourier integrals for Gaussian RBF} \label{Fourier_from_GRBF}


Consider   the case when $g$ in \eqref{fourier1} is a Gaussian RBF with the shape parameter $\lambda>0$ and center at a  point with cartesian coordinates $(a,b)$, multiplied by a radially-symmetric complex-valued function $h$; in other words, let
$$
g(\rho,\theta) =   h(\rho) e^{-\lambda ((x-a)^2+(y-b)^2)}  , \quad x = \rho \cos(\theta), \quad y = \rho \sin(\theta).
$$
Using the polar coordinates for the center,
$$
a  = q  \cos(\alpha ), \quad b  = q  \sin(\alpha ), 
$$
we can rewrite it as
\begin{equation}
\label{fespecial}
g(\rho,\theta) =   h(\rho)    e^{ - \lambda ( q^2 + \rho^2 - 2 \rho q \cos{(\theta - \alpha )}) }.
\end{equation}
In this section we discuss an expression for \eqref{fourier1} with $g$ as in \eqref{fespecial}:
\begin{equation} \label{firstF}
\begin{split}
F(r, \phi; q, \alpha)= & \frac{1}{\pi} \int_{0}^\infty\int_0^{2\pi} e^{ - \lambda ( q^2 + \rho^2 - 2 \rho q \cos{(\theta - \alpha )}) } e^{2\pi i r \rho \cos (\theta- \phi) } h(\rho) \rho d\rho d\theta.
\end{split}
\end{equation}
\begin{thm} \label{thm1}
For $F$ defined in \eqref{firstF},
\begin{equation} \label{FthroughLaplace}
F(r, \phi; q, \alpha)   =     e^{-\lambda q^2} \mathcal L\left[ I _0\left(2 \sqrt{\cdot \Omega} \right) h(\sqrt{\cdot})  \right](\lambda),
\end{equation}
where 
\begin{equation} \label{omega}	
\Omega = \Omega(r,\phi; q,\alpha) =  \lambda^2 q^2  + 2\pi i r \lambda q \cos (\phi-\alpha)-\pi^2 r^2 ,
\end{equation}
$I_0$ is the modified Bessel function of the first kind, and $\mathcal L[\cdot]$ denotes the Laplace transform. 
\end{thm}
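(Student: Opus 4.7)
The plan is to peel off the radial factors, do the angular integral explicitly using a Bessel identity, and recognize what remains as a Laplace transform.

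First I would pull the constant $e^{-\lambda q^2}$ out of \eqref{firstF} and interchange the order of integration, writing
\begin{equation*}
F(r,\phi;q,\alpha) = \frac{e^{-\lambda q^2}}{\pi} \int_0^\infty e^{-\lambda \rho^2} h(\rho)\, \rho \left[\int_0^{2\pi} e^{A(\rho)\cos(\theta-\alpha) + B(\rho)\cos(\theta-\phi)} d\theta \right] d\rho,
\end{equation*}
where $A(\rho)=2\lambda\rho q$ and $B(\rho) = 2\pi i r \rho$. The angular integral is the central object.

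Next I would collapse the two shifted cosines into a single one with a complex argument. Expanding in exponentials, $A\cos(\theta-\alpha) + B\cos(\theta-\phi) = \tfrac{1}{2} Z_1 e^{i\theta} + \tfrac{1}{2} Z_2 e^{-i\theta}$ with $Z_1 = Ae^{-i\alpha}+Be^{-i\phi}$ and $Z_2 = Ae^{i\alpha}+Be^{i\phi}$. Setting $z^2 = Z_1 Z_2$, one checks that these exponentials reassemble into $z\cos(\theta-\beta)$ for a suitable (possibly complex) $\beta$. By periodicity, $\int_0^{2\pi} e^{z\cos(\theta-\beta)} d\theta = 2\pi I_0(z)$ for every complex $z,\beta$, since the left-hand side is entire in both variables and agrees with $2\pi I_0(z)$ on the real axis. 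A direct multiplication gives
\begin{equation*}
Z_1 Z_2 = A^2 + B^2 + 2AB\cos(\phi-\alpha) = 4\rho^2\bigl(\lambda^2 q^2 - \pi^2 r^2 + 2\pi i r \lambda q\cos(\phi-\alpha)\bigr) = 4\rho^2 \Omega,
\end{equation*}
so the angular integral equals $2\pi I_0(2\rho\sqrt{\Omega})$. Note that, because $I_0$ is even, the choice of branch of $\sqrt{\Omega}$ is irrelevant.

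Finally I would substitute $u=\rho^2$, $du=2\rho\, d\rho$, to obtain
\begin{equation*}
F(r,\phi;q,\alpha) = e^{-\lambda q^2} \int_0^\infty e^{-\lambda u} I_0\!\left(2\sqrt{u\Omega}\right) h(\sqrt{u})\, du,
\end{equation*}
which is precisely \eqref{FthroughLaplace}. The main technical subtlety is step two: the cosines have complex coefficients, so $z$ and $\beta$ are complex. I would justify the Bessel identity for complex arguments either by the analytic continuation argument above, or equivalently by using the generating series $e^{\tfrac{z}{2}(t+1/t)} = \sum_n I_n(z)\, t^n$ with $t=e^{i\theta}$ after rescaling, and observing that only the $n=0$ term survives upon integration. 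Everything else is bookkeeping and a change of variables.
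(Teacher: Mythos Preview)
Your proof is correct and follows essentially the same approach as the paper: pull out $e^{-\lambda q^2}$, evaluate the angular integral as $2\pi I_0(2\rho\sqrt{\Omega})$, then substitute $u=\rho^2$ to obtain the Laplace transform. The only difference is cosmetic---the paper combines the two cosines into a single $\sqrt{A^2+2AB\cos\alpha+B^2}\,\cos(\theta-\gamma)$ and invokes the integral representation of $I_0$, whereas you pass through the exponential form $\tfrac{1}{2}Z_1 e^{i\theta}+\tfrac{1}{2}Z_2 e^{-i\theta}$ and appeal to analytic continuation; both routes arrive at the same identity with the same branch-independence remark.
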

\begin{proof}
	Notice that by \eqref{firstF},
\begin{equation} \label{firstFalt}
F(r, \phi; q, \alpha)  
=  \frac{1}{\pi}e^{-\lambda q^2} \int_0^\infty d\rho  \, h(\rho) \rho e^{-\lambda \rho^2}    \int_0^{2\pi} e^{    2 \lambda   \rho q \cos{(\theta - \alpha )} } e^{2\pi i r \rho \cos (\theta- \phi) } d\theta .
\end{equation}
For arbitrary constants $A, B\in \C$ we have the following identity:
\begin{equation} \label{I}	
  \int_0^{2\pi} e^{2A \cos (\theta-\alpha)} e^{2B \cos(\theta)}d\theta 
=2\pi I _0\left(2\sqrt{A^2 + 2 A B \cos \alpha+B^2}\right).
\end{equation}
Observe that $I_0$ is an entire even function, so that the value in the right hand side of \eqref{I} is independent of the choice of the branch of the square root.

Identity \eqref{I} is  a straightforward consequence of the fact that
$$
A \cos (\theta-\alpha) + B \cos(\theta) = \sqrt{A^2 + 2 A B \cos \alpha+B^2} \cos (\theta - \gamma), 
$$
where
$$
\gamma = \arctan \left( \frac{A\sin \alpha}{B + A \sin \alpha}\right),   
$$
(with an appropriate choice of the branch of the square root), along with the identity (\cite{abramowitz}, formula (9.6.16)):
$$
\mathop{I_{0}\/}\nolimits\!\left(z\right)=\frac{1}{\pi}\int_{0}^{\pi}e^{\pm z%
\mathop{\cos\/}\nolimits\theta}d\theta.
$$

Using \eqref{I} in \eqref{firstF} we conclude that
\begin{equation} \label{secondF}
\begin{split}
F(r, \phi; q, \alpha) & =    2 e^{-\lambda q^2} \int_0^\infty   e^{-\lambda \rho^2}    I _0\left(2\rho \sqrt{\Omega} \right) h(\rho)   \rho\,  d\rho   \\
& =    e^{-\lambda q^2} \int_0^\infty   e^{-\lambda \rho}    I _0\left(2 \sqrt{\rho \Omega} \right) h(\sqrt{\rho})   \,  d\rho ,
\end{split}
\end{equation}
which is equivalent to \eqref{FthroughLaplace}.
\end{proof}

Formulas in Theorem~\ref{thm1} are the basic building blocks for the algorithm proposed below.
Let us particularize these identities to the  case of a circular exit pupil of an optical system, when $h$ can be taken as the characteristic function of the interval $[0,1]$:
\begin{cor} \label{cor2}
Let
$$
h(\rho)=\chi_{[0,1]}(\rho)=\begin{cases}
1, & \text{if } 0\leq \rho\leq 1, \\
0, & \text{otherwise.}
\end{cases}
$$
Then $F$ in \eqref{firstF} takes the form
\begin{equation} \label{seriesF1}
F(r, \phi; q, \alpha) =   e^{-\lambda  q ^2}  \sum_{s=0}^\infty \frac{m_s(\lambda )}{(s!)^2} \,  \Omega(r,\phi;q ,\alpha )^{s},
\end{equation}
where 
\begin{equation} \label{defM1}
m_s(\lambda)=\begin{cases}
\displaystyle  \int_0^1   e^{-\lambda \rho} \rho^s d\rho, & \lambda \neq 0,\\
(1+s)^{-1}, & \lambda = 0.
\end{cases}
\end{equation}
Furthermore, $|m_s(\lambda)|\leq 1$ for $\Re(\lambda)\geq 0$, and the series in \eqref{seriesF1} is uniformly convergent for all values of $r, q\geq 0$, $\phi, \alpha\in \R$ and $\lambda>0$.
\end{cor}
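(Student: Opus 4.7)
The plan is to reduce the Laplace transform representation in Theorem~\ref{thm1} to an explicit integral on $[0,1]$ by using the choice $h=\chi_{[0,1]}$, then expand the modified Bessel factor as a power series, swap sum and integral, and recognize the resulting moment-type integrals as $m_s(\lambda)$.

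First I would observe that for $h=\chi_{[0,1]}$ we have $h(\sqrt{\rho})=\chi_{[0,1]}(\rho)$, so \eqref{FthroughLaplace} becomes
\begin{equation*}
F(r,\phi;q,\alpha) = e^{-\lambda q^2}\int_0^1 e^{-\lambda\rho}\, I_0\!\left(2\sqrt{\rho\,\Omega}\right) d\rho .
\end{equation*}
Next I would use the standard power series $I_0(z)=\sum_{s\geq 0} (z/2)^{2s}/(s!)^2$, which is entire, to write $I_0(2\sqrt{\rho\Omega}) = \sum_{s\geq 0} (\rho\Omega)^s/(s!)^2$. Since this series converges uniformly in $\rho\in[0,1]$ for every fixed $\Omega$ (the partial sums are majorised by $\sum (|\Omega|)^s/(s!)^2$, independent of $\rho$), I can interchange summation and integration to obtain
\begin{equation*}
F(r,\phi;q,\alpha) = e^{-\lambda q^2}\sum_{s=0}^\infty \frac{\Omega^s}{(s!)^2}\int_0^1 e^{-\lambda\rho}\rho^s\, d\rho,
\end{equation*}
which, after identifying the inner integral with $m_s(\lambda)$ from \eqref{defM1} (and noting that its $\lambda\to 0$ limit reproduces the stated $1/(s+1)$), is precisely \eqref{seriesF1}.

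For the bound on $m_s$, I would simply note that for $\Re(\lambda)\geq 0$ and $\rho\in[0,1]$, $|e^{-\lambda\rho}|=e^{-\Re(\lambda)\rho}\leq 1$, so
\begin{equation*}
|m_s(\lambda)|\leq \int_0^1 \rho^s\, d\rho = \frac{1}{s+1}\leq 1.
\end{equation*}
Combined with the super-exponential decay of $1/(s!)^2$, this gives a majorant $\sum |\Omega|^s/(s!)^2$ that is finite and, in fact, locally uniformly bounded in the parameters $r,q,\phi,\alpha,\lambda$ (since $\Omega$ in \eqref{omega} depends continuously on them). The Weierstrass $M$-test then yields the uniform convergence claimed.

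I do not expect any serious obstacle: the only delicate points are the justification of the interchange of sum and integral (immediate from the uniform bound on $|\rho\Omega|^s/(s!)^2$ on $[0,1]$) and the recognition that the $\lambda=0$ case of $m_s$ is really the limit of the general formula, which is a routine application of l'Hôpital or a direct computation of $\int_0^1 \rho^s d\rho$. Care must be taken with the branch of the square root in $I_0(2\sqrt{\rho\Omega})$, but since $I_0$ is an entire even function (as already observed after \eqref{I}), the result is branch-independent.
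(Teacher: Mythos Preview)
Your proof is correct and follows essentially the same route as the paper: specialize \eqref{FthroughLaplace} to $h=\chi_{[0,1]}$, expand $I_0$ via its Taylor series, interchange sum and integral by uniform convergence, and identify the resulting moments as $m_s(\lambda)$. The only (minor) difference is your bound $|m_s(\lambda)|\le \int_0^1 \rho^s\,d\rho = 1/(s+1)$, which is in fact sharper and more direct than the paper's estimate via $m_0(\Re\lambda)$.
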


\begin{rmk}
Integration by parts and straightforward calculations show that for $s=0, 1, 2, \dots$ and $\lambda\neq 0$,
\begin{equation} \label{recurr1}
m_0(\lambda)=\frac{1-e^{-\lambda} }{\lambda}, \quad m_{s+1}(\lambda)=\frac{(s+1) m_s(\lambda)- e^{-\lambda}}{\lambda}.
\end{equation}
Notice also that
\begin{equation}
\label{m_kderiv}
m_0(\lambda)=\mathcal L[\chi_{[0,1]}](\lambda), \quad m_s(\lambda)=\left( - \frac{d}{d\lambda}\right)^{s-1} m_0(\lambda), \quad s\geq 1.
\end{equation}
\end{rmk}
\begin{proof}
By \eqref{FthroughLaplace}, the crucial step is the evaluation of the integral
$$
\int_0^1   e^{-\lambda \rho }    I _0\left(2 \sqrt{\rho \Omega} \right)    d\rho  .
$$
The Taylor series for the Bessel function (see \cite{abramowitz}, formula (9.6.12)),
\begin{equation*}
I _0\left(2 \sqrt{\rho\Omega} \right) =\sum_{s=0}^\infty \frac{\Omega^{s}}{(s!)^2} \rho^s,
\end{equation*}
is locally uniformly convergent on the whole plane, and thus
\begin{equation}
\label{integralMain1}
\int_0^1   e^{-\lambda \rho }    I _0\left(2 \sqrt{\rho \Omega} \right)    d\rho = \sum_{s=0}^\infty \frac{\left( m_s(\lambda) \Omega\right)^{s}}{(s!)^2} .
\end{equation}
Furthermore, the bound for $m_s(\lambda)$ is trivial for purely imaginary $\lambda$, while for  $\Re(\lambda)> 0$, 
\begin{align*}
|m_s(\lambda)|& \leq |m_s(\Re(\lambda))| \leq  \int_0^1 e^{-\Re(\lambda) \rho}   d \rho= |m_0(\Re(\lambda))|=\frac{  1 - e^{-\Re (\lambda)}  }{\Re (\lambda)} \leq 1,
\end{align*}
which concludes the proof.
\end{proof}

Clearly, formulas of Theorem~\ref{thm1} and Corollary~\ref{cor2} can be easily extended to the case when $g$ is a linear combination of functions of the form \eqref{fespecial},  that is,
\begin{equation*}
g(\rho,\theta) =  h(\rho)   \sum_{k=1}^K c_k  e^{ - \lambda_k ( q_k^2 + \rho^2 - 2 \rho q_k \cos{(\theta - \alpha_k )}) }, \quad h(\rho)= \chi_{[0,1]}(\rho) ,
\end{equation*}
where $c_k\in \C$ are certain coefficients, $\lambda_k >0 $ are the shape parameters, and  $(q_k, \alpha_k)$ are the polar coordinates of the corresponding centers of the Gaussian RBFs.  The corresponding Fourier transform $F$, defined in \eqref{fourier1}, can be written as 
$$
F(r, \phi)= \sum_{k=1}^K c_k e^{-\lambda_k q_k^2}  \sum_{s=0}^\infty \frac{m_s(\lambda_k)}{(s!)^2} \,  \Omega(r,\phi;q_k,\alpha_k)^{s},
$$
with $\Omega$ defined in \eqref{omega}.

If we assume additionally that all the shape parameters are equal,
$
\lambda_1=\dots=\lambda_K=\lambda,
$
we can rearrange the calculations as follows:
\begin{equation*}
F(r, \phi)= \sum_{s=0}^\infty  \frac{m_s(\lambda)}{(s!)^2} \, H_s(r,\phi), \quad H_s(r,\phi)=\sum_{k=1}^K c_k \,e^{-\lambda q_k^2}  \, \Omega(r,\phi;q_k,\alpha_k)^{s}.
\end{equation*}

\section{Calculation of diffraction integrals} \label{sec:calculation}

Optical systems can be described by means of the complex-valued pupil function $P(\rho,\theta)$ that, in the case of a circular pupil, can be  expressed in (normalized) polar coordinates as
\begin{equation}\label{pupil}
P(\rho,\theta) = A(\rho,\theta) \exp\left( - i \frac{2 \pi n}{ \ell } W (\rho,\theta) \right) ,
\end{equation}
where $A(\rho,\theta)$ is the aperture or amplitude transmittance function of the optical system,  $W(\rho,\theta)$ is  the wavefront error, and constants $n$  and $\ell$ denoting  the refractive index and the wavelength of the light, respectively. 
According to Fourier optics~\cite{BornWolf}, the complex-valued point-spread function of such a system is given by the diffraction integral
\begin{equation}\label{dif_intBis}
U(r, \phi ;f) = \frac{1}{\pi} \int_0^1 \int_0^{2\pi} F(\rho,f) P(\rho, \theta) \exp\left(2\pi i \rho r \cos(\theta-\phi)\right) \rho d\theta d\rho ,
\end{equation}
where $f$ represents the defocussing ($f=\pi/2$ corresponds to one focal depth), and $(r,\phi)$ denote the polar coordinates in the image plane. The focal factor $F$ is
\begin{equation}\label{defocussing}
F(\rho,f)=\exp\left(i f \frac{1-\sqrt{1-s_0^2 \rho^2}}{1-\sqrt{1-s_0^2}}\right),
\end{equation}
where $0<s_0<1$ is the numerical aperture (NA) of the optical system. For small values of $s_0$ (say, $s_0<0.5$) the focal factor is well approximated by $\exp(if \rho^2)$, and the resulting integral is
\begin{equation}\label{dif_int}
U(r, \phi ;f) = \frac{1}{\pi} \int_0^1 \int_0^{2\pi} \exp(i f \rho^2) P(\rho, \theta) \exp\left(2\pi i \rho r \cos(\theta-\phi)\right) \rho d\theta d\rho ,
\end{equation}
known as the Debye approximation of the Rayleigh integral of the optical impulse response or the point-spread function (PSF) of the system, evaluated by
\begin{equation*}
PSF(r, \phi ;f) = | U(r, \phi ;f) |^2.
\end{equation*}

In this section we discuss the computational framework and the efficient implementation of the algorithm for calculation of such diffraction integrals for small NA, using \eqref{dif_int}, although our method easily carries over to any NA and to the focal factor \eqref{defocussing}. 

Observe that formally the defocussing term ($\exp(i f \rho^2)$) in   \eqref{dif_int} could be absorbed in the aberration term (pupil function). However, from the physical point of view, the aberration term is an intrinsic error of the optical system, while the defocussing is a deliberately introduced defect, which can take on several values that are relatively large with respect to the aberrations of the system. For this reason, a separate treatment is commonly preferred, see \cite{Braat_etal2002}.

\subsection{Computational framework }\label{computFrame}

In practice,  the wavefront is sampled at a discrete and finite set of points on the pupil (the unit disk). In other words, the input data set has the form $(x_j,y_j,W_j)$, with $W_j = W(x_j,y_j)$. Values $W_j$ can be measured directly  or obtained by standard procedures. 

As it was mentioned above, semi-analytic methods for calculation of Fourier  \eqref{fourier1} or diffraction integrals \eqref{dif_int} are based on an approximation of the integrand by a function of a suitable chosen form. The brief outline of the results in Section~\ref{sec:RBFapprox} and the reasonable assumption that the complex pupil function $P$ in \eqref{pupil} is sufficiently smooth allows us replace the actual pupil function by its approximation by a linear combination of Gaussian radial basis functions (GRBF). It is convenient to point out that the implementation of this approximation and the achieved accuracy are not subject of this paper, and are treated in the literature, see Section~\ref{sec:RBFapprox}.

Hence, our starting point for calculation of \eqref{dif_int} is the ansatz that the complex pupil function $P$ is given by an expression of the form
\begin{equation}
\label{basisExp}
P(\rho,\theta) =   \sum_{k=1}^K c_k  g_k(\rho,\theta) ,  \quad g_k(\rho,\theta) = e^{ - \lambda_k ( q_k^2 + \rho^2 - 2 \rho q_k \cos{(\theta - \alpha_k )}) },  
\end{equation}
where $c_k\in \C$ are certain coefficients, $\lambda_k >0 $ are the shape parameters, and  $(q_k, \alpha_k)$ are the polar coordinates of the  centers of the Gaussian RBFs.  
A practical way to obtain \eqref{basisExp} is to fix a basis   $\{ g_k \}_{k=1}^K$ of GRBF, or equivalently, to choose, for each index $k$, values for $q_k$, $\alpha_k$ and $\lambda_k$. 
The optimal choice of the shape parameter based on the given data is a highly non-linear problem, usually solved by cross validation, see \cite{Fasshauer/Zhang,Rippa}. Using this method we arrived at the following rule of thumb  been tested in practice: take the same shape parameter for all $k$'s,
\begin{equation}
\label{lambdaequal}
 \lambda_1=\dots=\lambda_K=\lambda\in [10,20],
\end{equation}
select $r\in \N$ and  create a grid of $r \times r$ equally spaced  points in the square $[-1.2, 1.2]^2$. The goal of covering an area larger than the unit disk is to deal with the Gibbs phenomenon at the boundary of the disk. The polar coordinates of these  $K=r^2$ points will be the pairs $(q_k, \alpha_k)$ in \eqref{basisExp}. 

With the basis $\{g_k\}$ chosen, we can calculate the coefficients $c_k$ fitting the pupil function by means of the linear least squares. For a high number of centers $K$ this problem is ill-conditioned, and the use of  Tikhonov regularization \cite{regularization} is recommended, with an appropriate choice of regularization parameter, for instance, by the $L$-curve method or by Morozov discrepancy principle. In our experiments, implemented in Matlab, we used the algorithms described in~\cite{calvetti}, see also \cite{Lijun Li}.  For the error estimates, see the references in Section~\ref{sec:RBFapprox}. 

An alternative approach to the 2D nonlinear approximation of functions by sums of exponentials is described in  \cite{Andersson2010}.

With all the parameters in the representation \eqref{basisExp} calculated  we can use the formulas obtained in Section \ref{Fourier_from_GRBF}, namely 
\begin{equation} \label{U1}	
U(r, \phi ;f)= \sum_{k=1}^K c_k \,e^{-\lambda_k q_k^2} \sum_{s=0}^\infty \frac{m_s(\lambda_k-if)}{(s!)^2} \,  \Omega(r,\phi,q_k,\alpha_k)^{s},
\end{equation}
with
\begin{equation*} 
\Omega(r,\phi; q,\alpha) =  \lambda^2 q^2  + 2\pi i r \lambda q \cos (\phi-\alpha)-\pi^2 r^2 .
\end{equation*}
Furthermore, with the additional assumption \eqref{lambdaequal} we can rewrite this formula as
\begin{equation}
\label{defH}
U(r, \phi ;f)= \sum_{s=0}^\infty  \frac{m_s(\lambda-if)}{(s!)^2} \,  H_s(r,\phi), \quad H_s(r,\phi)=\sum_{k=1}^K c_k \,e^{-\lambda q_k^2}  \, \Omega(r,\phi,q_k,\alpha_k)^{s}.
\end{equation}

\subsection{Efficient implementation of the calculations} \label{evaluation_grbf}

The recurrence relation \eqref{recurr1} can present some numerical instability, especially for large value of the imaginary part of the argument; these considerations suggest to replace the coefficients $m_s(\lambda)$, defined in \eqref{defM1}, by their normalized counterparts, 
\begin{equation*}
\widehat m_s(\lambda) =\frac{m_s(\lambda)}{(s!)^2},
\end{equation*}
that can be efficiently generated for $\lambda\neq 0$ by the following double recurrence:
\begin{equation}
\label{rec_hatmk}
  \widehat m_{s+1}(\lambda)=\frac{1}{\lambda} \left[ \frac{\widehat m_s(\lambda)}{s+1} - \tau_s(\lambda)\right]  , \quad  \tau_{s+1}(\lambda) = \frac{\tau_s(\lambda)}{(s+2)^2}, \quad s=0, 1, 2, \dots
\end{equation}
with the initial conditions
\begin{equation}
\label{rec_hatmkInit}
\tau_0(\lambda)= e^{-\lambda},\quad \widehat m_0(\lambda)=\frac{1-\tau_0(\lambda)}{\lambda}.
\end{equation}

Since formulas in \eqref{U1}--\eqref{defH} contain an infinite sum, we must choose  a \emph{cut-off parameter}, $S\in \N$, so that the expression in \eqref{U1} is replaced by
\begin{equation} \label{U1truncated}
U(r, \phi ;f)= \sum_{k=1}^K c_k \,e^{-\lambda_k q_k^2} \sum_{s=0}^S \widehat  m_s(\lambda_k-if) \,  \Omega(r,\phi,q_k,\alpha_k)^{s},
\end{equation}
while \eqref{defH} becomes
\begin{equation} \label{U1truncated_withH}
U(r, \phi ;f)= \sum_{s=0}^S  \widehat  m_s(\lambda_k-if) \,  H_s(r,\phi),
\end{equation}
where we denote 
\begin{equation*} 
H_s(r,\phi)=\sum_{k=1}^K c_k \,e^{-\lambda_k q_k^2}  \, \Omega(r,\phi,q_k,\alpha_k)^{s}.
\end{equation*}
It is worth observing that in these formulas the defocus parameter $f$ and the coordinates  $r$ and $\phi$ are independent: $f$ does not appear in $\Omega$, while  $m_s (\lambda_k- i f )$ need to be calculated only once regardless the number of points at which we evaluate the functions.

In practice, we want to find the values of $U(r, \phi ;f)$ at a vector of $J$ points on the plane, given by their polar coordinates $(r, \phi)$, and for a vector of defocus parameters $f$. In other words, 
we need to compute efficiently the matrix
$$
\bm U = \left(  U(r_j, \phi_j ;f_m) \right)_{m,j} \in \C^{M\times J}.
$$
Let us discuss the algorithm under the assumption \eqref{lambdaequal}, so we will use formula~\eqref{defH}.

Applying \eqref{rec_hatmk}--\eqref{rec_hatmkInit} to the vector $\bm f$, we obtain the matrix 
\begin{equation*}
\bm M = \left(\widehat m_s(\lambda- i f_m) \right)_{m=1, \dots, M}^{s=0, 1, \dots, S} \in \C^{M \times (S+1)},
\end{equation*}
which, as it was mentioned, is computed once for all $(r_j, \phi_j)$'s. 

Another observation that will speed up computations is that if $P$ and $Q$ are $\R^{2\times 1}$ vectors of Cartesian coordinates of points on $\R^2$ with polar coordinates $ (r,\phi)$ and $ (q,\alpha)$, respectively, then by   \eqref{omega},
\begin{equation} \label{omegaBis}	
\begin{split}
  \Omega(r,\phi; q,\alpha) & =  \lambda^2 q^2  + 2\pi i r \lambda q \cos (\phi-\alpha)-\pi^2 r^2   = ( \lambda Q + \pi i P)^T   ( \lambda Q + \pi i P) ,
  \end{split}
\end{equation}
where the superscript $T$ denotes the transpose of a matrix. In other words, $  \Omega(r,\phi; q,\alpha)$ is the square of the euclidean distance (in $\R^2$) from $\lambda Q$ to $-\pi i P$. It allows us to encode efficiently the evaluation of  $\bm \Omega=(\Omega_{k,j}) \in \R^{K\times J}$,
$$
\Omega_{k,j}= \Omega(r_j,\phi_j,q_k,\alpha_k),
$$
by first finding the Cartesian coordinates of the centers,
$$
a_k= q_k \cos\alpha_k, \quad b_k=q_k \sin \alpha_k, \quad k=1, \dots, K.
$$ 
Finally, we find  the column vector 
\begin{equation}
\label{rowD}
\bm d = (c_k \,e^{-\lambda q_k^2})_{k=1, \dots, K} \in \R^{K\times 1}.
\end{equation}
We describe the rest of the procedure in the Algorithm~\ref{algo1}.

\IncMargin{1em}
\begin{algorithm}
  \SetKwInOut{Input}{input}\SetKwInOut{Output}{output}
  \Input{Matrices $\bm M   \in \C^{M \times (S+1)}$, $\bm \Omega=(\Omega_{k,j}) \in \R^{K\times J}$, and $\bm d  \in \R^{K\times 1} $ }
  \Output{Matrix $\bm U  \in \C^{M\times J}$}
  \BlankLine
  \emph{Initialization:}
 
 Set $\bm U =\bm 0\in \R^{M \times J}$
 
 Define $\bm R\in \R^{K\times J}$ as the matrix of $1$'s, and compute 
 \begin{equation}
\label{calculoH}
\bm H = \bm d^T \bm  R.
\end{equation}

 \tcp*[f]{At this stage,  $\bm H=H_0(r_j,\phi_j)_{j=1, \dots, J^2}\in \C^{1\times J}$}
 
  \BlankLine
  \For{$s\leftarrow 0$ \KwTo $S-1$}{
      Calculate 
  \begin{equation*}
\bm U \leftarrow \bm U + \bm M(:, s+1)  \bm H,
\end{equation*}
  Update $\bm R$ by
$$
\bm R \leftarrow \bm R .* \bm \Omega
$$
\tcp*[f]{We use  the Matlab notation  (\texttt{.*}) for the term-by-term multiplication}\;
Compute $\bm H$ using \eqref{calculoH}\;
  }
  \caption{Implementation of formula~\eqref{defH}.}\label{algo1}
\end{algorithm}\DecMargin{1em}

It should be noticed finally that we can slightly increase the accuracy adding a constant term to the approximation of the complex pupil function, that is, instead of \eqref{basisExp} using the representation 
\begin{equation*}
P(\rho,\theta) =  c_0+ \sum_{k=1}^K c_k  g_k(\rho,\theta) ,  \quad g_k(\rho,\theta) = e^{ - \lambda_k ( q_k^2 + \rho^2 - 2 \rho q_k \cos{(\theta - \alpha_k )}) },  
\end{equation*}
where $c_0$ can be taken as the average of the values of $P$ at the given points. Notice that the contribution to $U$ corresponding to this term can be evaluated using Algorithm~\ref{algo1} with $\lambda=0$.

\subsection{Error estimates and convergence analysis} \label{sec:error estimates}

The algorithm described above presents two main sources of errors (besides the standard truncation and machine arithmetic errors that we do not discuss here):
\begin{itemize}
\item the fitting error in \eqref{basisExp};
\item the truncation error, given by the choice of $S$, in replacing the infinite series in \eqref{U1} or \eqref{defH} by finite sums.
\end{itemize}
The former is not subject of this paper, and the reader is referred to the discussion in Section~\ref{sec:RBFapprox}.

Regarding the the truncation error, observe that by Corollary~\ref{cor2},  for $\lambda>0$, 
$$
|m_s(\lambda-if)|\leq 1.
$$
On the other hand, by expression \eqref{omega},
$$ 
|\Omega|=|\Omega(r,\phi; q,\alpha)|  =\sqrt{ \left( \lambda^2 q^2  -\pi^2 r^2\right)^2+ \left( 2\pi   r \lambda q \cos (\phi-\alpha)\right)^2
}=    \lambda^2 q^2  +\pi^2 r^2 .
 $$
In consequence, the truncation error in \eqref{U1} is given by
\begin{align*}
\left| \sum_{s=S+1}^\infty \frac{m_s(\lambda_k-if)}{(s!)^2} \,  \Omega(r,\phi,q_k,\alpha_k)^{s}\right| & \leq \sum_{s=S+1}^\infty \frac{1}{(s!)^2} \,  \left(\lambda_k^2 q_k^2  +\pi^2 r^2  \right)^{s},
\end{align*}
or in other words, the remainder of the Taylor expansion for the modified Bessel function
$$
I_0\left(2 \sqrt{\lambda_k^2 q_k^2  +\pi^2 r^2} \right),
$$
and thus,
$$
\left| \sum_{s=S+1}^\infty \frac{m_s(\lambda_k-if)}{(s!)^2} \,  \Omega(r,\phi,q_k,\alpha_k)^{s}\right|  \leq \frac{\left(\lambda_k^2 q_k^2  +\pi^2 r^2  \right)^{S+1}}{(S+1)!} \max_{0\leq t\leq \lambda_k^2 q_k^2  +\pi^2 r^2} \left| I^{(S+1)}_0\left(t \right)\right|.
$$

For the practical implementation, in our algorithm with \eqref{lambdaequal}, $q\leq 1.7$, $r\leq 2$, $\lambda\leq 20$, so that
\begin{equation}
\label{upperbound}
|\Omega|\leq 1200.
\end{equation}
This bound is very conservative, but shows that in the worst case scenario $S=100$ provides a truncation error of order $10^{-9}$.

Although, as the numerical experiments explained in Section~\ref{sec5} show, the method performs well even for reasonably large values of $r$ and $f$ in \eqref{dif_int}, we can slightly improve accuracy by using the scheme \eqref{U1}, and replacing the infinite series by its Pad\'e approximant of a suitable order with center at the origin (instead of a mere truncation, as we did above), see e.g.~\cite{gravesmorris}.

\section{Outline of the extended Nijboer-Zernike theory} \label{sec:ENZ}

In Section~\ref{sec5} we are going to produce the results of some numerical experiments regarding the behavior of the algorithm described above, and to compare them with the standard procedures used for the computation of diffraction integrals.

One of the best-known semi-analytic methods of calculation of these integrals is the so-called Extended Nijboer--Zernike (ENZ) theory~\cite{Braat_etal2002,Janssen2002}, similar in spirit to the method presented in this paper, with the main difference that  the pupil function $P$ is expanded in terms of Zernike polynomials, instead of Gaussian radial basis functions. 

Recall that the $(n,m)$ Zernike polynomial is defined as
$$ Z_n^m (\rho,\theta) =  \left\{ 
\begin{array}{ll} 
N_n^m R_n^{|m|}(\rho) \cos(m \theta), & m\geq 0 ,
\\  N_n^m R_n^{|m|}(\rho) \sin(|m| \theta), &m < 0 ,
\end{array} \right. $$
where $0 \leq \rho \leq 1$ and $0 \leq \theta  \leq 2 \pi$ are polar coordinates, and the double index $(n,m)$ is such that $n\geq 0$, $m \leq |n|$, and $n-m$ is an even number. The radial part $R_n^{|m|}(\rho)$ is a rescaled Jacobi polynomial $P_s^{\alpha,\beta}$,
$$ R_n^{|m|}(\rho) = (-1)^{(n-m)/2} \rho^m P_{(n-m)/2}^{(m,0)}(1-\rho^2 ),
$$ 
given explicitly by 
\begin{equation} \label{Zernike_radial} R_n^{|m|}(\rho) = \sum_{s=0}^{ (n-|m|)/2 } \frac{ (-1)^s (n-s)!}{ s! ((n+|m|)/2 - s)! ((n-|m|)/2 - s)!} \rho^{n-2s}.
\end{equation}
With the normalization constant $N_n^m = \sqrt{ (2-\delta_{0,m})(n+1)}$ the Zernike polynomials become orthonormal on the unit disk $\mathbb D = \{ (x,y) \in \mathbb R^2 / x^2+y^2 \leq 1\}$ with respect to the area measure (see e.g.~\cite[Section 9.2]{BornWolf} for further properties of Zernike polynomials).

For the reader's convenience, in this section we present an outline of the ENZ formulas. 
Assume that (with a suitable approximation)
\begin{equation} \label{expression:pupil:ENZ}
P(\rho,\theta) = \sum_{n,m} c_n^m Z_n^m(\rho,\theta).
\end{equation}
If the coefficients $c_n^m$ in the expression \eqref{expression:pupil:ENZ} are known, then the integral $U$ in \eqref{dif_int} 
takes the form
\begin{equation} \label{expression:ENZ:Utotal}
 U(r,\phi;f) = \sum_{n,m} c_n^m U_n^m(r,\phi;f),
\end{equation}
where
$$ U_n^m (r,\phi;f) = \frac{1}{\pi} \int_0^1 \int_0^{2\pi}\left[\exp(i f \rho^2) Z_n^m(\rho,\theta) \exp\left((2\pi i \rho r \cos(\theta-\phi)\right)\right]\rho d\theta d\rho . $$
According to the original ENZ theory, for the double-index $(n,m)$ with $m \geq 0$ (a necessary condition for the applicability of these formulas) we have
\begin{equation}\label{Unm}
 U_n^m(r,\phi; f) = 2 i^m V_n^m(r,f) \cos( m \phi ),
\end{equation}
with  
\begin{equation}\label{Vnm}
V_n^m (r, f) = \exp( i f ) \sum_{k=0}^{\infty} \left( \frac{ - i f }{ \pi r} \right)^k B_k(r)
\end{equation}
and  
\begin{equation}\label{B_k}
B_k(r) =  \sum_{j=0}^p u_{k j} \frac{ J_{m+k+2j+1} (2 \pi r) }{2 \pi r},
\end{equation}
with the coefficients
\begin{equation}\label{enz_coefs}
u_{k j} = (-1)^p \frac{m+k+2j+1}{q+k+j+1} \binom{ m+k+j}{k} \binom{k+j}{k} \binom{k}{p-j} \left/ \binom{q+k+j}{k} \right.
\end{equation}
where $p = (n-m)/2$ and $q=(n+m)/2$.
Here $J_\nu$ is the  Bessel  function of the first kind and order $\nu$, see e.g.~\cite[Chapter 9]{abramowitz}, and formulas \eqref{Vnm}--\eqref{B_k} are known as the \emph{power-Bessel series} (ENZ-PB) expression for $V_n^m$.

Observe that these formulas contain a removable singularity at $r=0$, so some extra care when organizing the calculations should be put. Moreover, they also present difficulties when the defocus parameter $f$ is large. According to \cite{Braat_etal2002}, approximately $3|f|$ terms are needed in general to accurately evaluate expression \eqref{Vnm}, and the practical use of that formula is limited to a range $|f| \leq 5 \pi$. 

In order to overcome these difficulties an alternative approach has been put forward in \cite{Janssen2004}. The main idea is the use of  Bauer's identity:
\begin{equation} \label{equation:Bauers}
\exp(i f \rho^2) = \exp\left(\frac{1}{2} i f \right) \sum_{k=0}^{\infty} (2k+1) i^k j_k \left( \frac{1}{2} f \right) R_{2k}^0 (\rho), 
\end{equation}
where $R_n^m$ is the radial part of the Zernike polynomials,  and 
\begin{equation} \label{equation:Bessel:j}
 j_k(z) = \left( \frac{\pi}{2z} \right)^{1/2} J_{k+(1/2)} (z), \quad k=0,1,\dots
\end{equation}

Using \eqref{expression:pupil:ENZ} and \eqref{equation:Bauers} in  \eqref{dif_int} we reduce the problem to the computation of integrals containing products of two radial Zernike polynomials, $R_{n_1}^{m_1}$ and $R_{n_2}^{m_2}$. The crucial idea is to use the linearization formulas for these products of the form 
$$ R_{m_1+2p_1}^{m_1} R_{m_2+2p_2}^{m_2} = \sum_{l} c_l R_{m_1+m_2+2l}^{m_1+m_2}, $$
where coefficients $c_l$ defined as
$$ c_l = \sum_{s_1,s_2,t} f_{p_1,s_1}^{m_1} f_{p_2,s_2}^{m_2} g_{s_1+s_2-2t,l}^{m_1+m_2}  $$
 are expressed in terms of the quantities $f_{p,s}^m$ and $g_{u,l}^m$ given explicitly by
\begin{equation} \label{expression:ENZ:coeff1}
f_{p,s}^m = (-1)^{p-s} \frac{2s+1}{p+s+1} \left[  \binom{m+p-s-1}{m-1} \binom{m+p+s}{s} \left/ \binom{p+s}{s} \right. \right], 
\end{equation}
for $s=0,\hdots,p$, and
\begin{equation} \label{expression:ENZ:coeff2}
g_{u,l}^m = \frac{m+2l+1}{m+u+l+1} \left[ \binom{m}{u-l} \binom{u+l}{l} \left/ \binom{m+l+u}{m+l} \right. \right],
\end{equation}
for $u=l,\hdots,l+m$. For $m=0$, expressions \eqref{expression:ENZ:coeff1}  and \eqref{expression:ENZ:coeff2} boil down to 
$$ f_{p,s}^0 = \delta_{p,s}, \quad g_{u,l}^0 = \delta{u,l} , $$
where $\delta$ is the Kronecker's delta.

As a result, we obtain  the so-called \textit{Bessel-Bessel series} (ENZ-BB) expression for $V_n^m$ in \eqref{Vnm}: for $n$, $m$  nonnegative integers with $n-m \geq 0$ and even, with $p=\frac{1}{2}(n-m)$ and $q=\frac{1}{2}(n+m)$,
\begin{equation} \label{Vnm_improved}
V_n^m(r,f)=\exp\left(\frac{1}{2} i f\right) \sum_{k=0}^{\infty} (2k+1) i^k j_k \left(\frac{1}{2}f \right) \sum_{l= l_0  }^{k+p} (-1)^l w_{k,l} \frac{ J_{m+2l+1}(2 \pi r)}{2 \pi r},
\end{equation}
where $l_0 = \max\{0, k-q, p-k\}$ and  
\begin{equation} \label{expression:ENZ:coeff4}
w_{k,l} = \sum_{s=0}^p \sum_{t=0}^{\min\{k,s\}} f_{p,s}^m b_{k,s,t} g_{k+s-2t,l}^m, 
\end{equation}
with
\begin{equation} \label{expression:ENZ:coeff3}
b_{s_1,s_2,t} = \frac{2s_1 + 2s_2 - 4 t +1 }{2s_1+2s_2-2t+1} \left( \frac{ A_{s_1-t} A_t A_{s_2-t} }{ A_{s_1 + s_2 - t} } \right),
\end{equation}
for $t=0, \hdots, \min\{s_1,s_2\}$ and $A_k = \binom{2k}{k}$.

In the special case of $m=0$ we have that 
$$ 
w_{k, k+p-2j} = b_{k,p,j}, \quad j=0,1,\hdots, \min\{k,p\}, 
$$
while all other $w_{k,l}$ vanish. In this way, all $w_{k,l} \geq 0$ and $\sum_{l} w_{k,l} = 1$. 

The conclusion of \cite{Janssen2004} is that this new scheme is valid and accurate even for very large values of $f$. By analyticity, the expression \eqref{Vnm_improved}  remains valid also for complex values of $f$, which is important in a number of practical applications, with the downside in the removable singularity of \eqref{equation:Bessel:j}  at  $f=0$. In practice, it is convenient to use both ENZ schemes if the defocus ranges from $0$ to large values of $f$.


Further enhancement of the Bessel-Bessel approach was suggested recently in \cite{VanHaverPhD,VanHaver,VanHaver2014}, where equation~\eqref{Vnm_improved} is rearranged for its evaluation in the following way:
\begin{equation} \label{ENZ_EBB_Vnm}
V_n^m(r,f)=  \sum_{k=0}^{\infty}  C_{2k}^0(f) \sum_{\substack{h=-n+2k, \\ h -m \text{ even}, \\ h\geq|m|}}^{n+2k}
(-1)^{\frac{m-h}{2}} A_{2k,n,h}^{0,m,m} \, B_h (r).
\end{equation}  
Functions $C$ and $B$ of the defocus parameter $f$ and of the radius $r$, respectively, are given by:
\begin{align} \label{ENZ_EBB_C}
C_{2k}^0(f) &= (2k+1) i^k  \exp\left(\frac{1}{2} i f\right) j_k \left(\frac{1}{2}f \right),\\ 
\label{ENZ_EBB_B}
B_h (r) &=  \frac{ J_{h+1}(2 \pi r)}{2 \pi r},
\end{align}  
and the coefficients $A_{n_1,n_2,n_3}^{m_1,m_2,m_3}$ are defined as
\begin{equation} \label{ENZ_EBB_A}
A_{n_1,n_2,n_3}^{m_1,m_2,m_3} =  (-1)^{(n_1-n_2-m_3)/2} \sqrt{n_3+1} \left( \begin{array}{ccc} \frac{1}{2} n_1 & \frac{1}{2}  n_2 & \frac{1}{2} n_3 \\  \frac{1}{2}  m_1 &  \frac{1}{2}  m_2 & -  \frac{1}{2}  m_3 \end{array} \right),
\end{equation}  
where the term in parenthesis above represents the Wigner 3j-symbol (see \cite[Chapter 34]{NIST}), and is a real number. These expressions constitute the \emph{``enhanced'' Bessel--Bessel series} (ENZ-EBB) expression for $V_n^m$ in \eqref{Vnm}.

\section{Comparative assessment of accuracy and efficiency} \label{sec5}

In this section, the performance of the methods based on the Gaussian radial basis functions (GRBF) proposed in this paper is analyzed and compared to the popular alternative approaches. 

Since the closed analytic expression for a Fourier transform type integrals like \eqref{fourier1} and \eqref{dif_int} is possible only for most elementary integrands, for their computation we must rely either on numerical or on semi-analytical methods (or analytical approximations). The first group comprises  quadrature-type numerical procedures in which the integration over a 2D domain is replaced by the calculation of a discrete sum based on evaluation of the integrand at a discrete set of points, with the particular challenge of integrating a highly oscillatory function. The most popular approach is the bi-dimensional discrete Fourier transform (we will refer to it as the FFT2-based approach), calculated via the Fast Fourier Transform algorithm. Its efficiency can be substantially enhanced using the fractional Fourier transform \cite{BaileySwartztrauber1994} or a ``butterfly diagram'' ideas \cite{Candes2009}, see also \cite{Gai2007}.

The best known representative of  the second group is the Extended Nijboer--Zernike (ENZ) theory that was briefly exposed in Section~\ref{sec:ENZ}.  

In this section, these alternatives are discussed and compared with our method, paying special attention to their computational complexity, precision, accuracy and  speed. We use the ``naive'' notion of complexity, understanding by this the number of real floating point operations (\emph{flops}) needed to run the algorithm. Since the exact number of flops is in general difficult or not feasible to calculate, the leading term for large values of the parameters is used. 

The assessments are performed under assumptions allowing the application of all the methods explained above. In other words, we will evaluate the diffraction integrals $U=U(r, \phi; f)$, defined in \eqref{dif_int}, in a grid of points   (in polar   coordinates) $(r,\phi)\in\R^{N\times N}$, and for a vector $\bm f\in \R^M$ of values of the defocus parameters $f$.  Furthermore, for the ENZ formulas we take into consideration only the terms containing the cosine, so that the original ENZ-PB formulas can be used.

\subsection{Some remarks about FFT vs semi-analytic methods}

In the FFT2-based scheme, the value of $U(r,\phi; f)$ is computed by means of the bi-dimensional fast Fourier transform. The crucial step is the substitution of the double integral in \eqref{dif_int} by a discrete sum. Notice that each new value of the defocus parameter $f$ obliges to calculate the values of $U$ completely, at a  computationally high cost. Furthermore, the use of the FFT requires re-sampling the wavefront at a regular Cartesian grid covering the pupil; for convenience, the length of the grid  should be an integer power of 2 in each direction. 

Another remarkable issue to be addressed when using the FFT2 scheme is the aliasing, 
 a typical phenomenon that can appear due to discontinuities of the integrand. In order to prevent this, the pupil must be small in comparison with the sampled area (or in other words, we must extend the pupil to a larger region, setting the pupil function to zero in the complementary domain), resulting in a large area where $U(r,\phi; f)$ is negligible \cite{Gai2007}. Thus, a big portion of the computational load of this scheme is useless, and in general the spatial resolution needed with this method will be much higher than that required for the semi-analytic methods like discussed here. However, FFT2 is the standard method used in commercial ray tracing packages, such as Zemax or Code V.

The advantage of the semi-analytic approaches, such as the ENZ theory or the method proposed here, is that they reduce the computation of $U$ in \eqref{dif_int} to evaluation of more or less complex explicit expressions in terms of some elementary or special functions. One of the benefits of having these formulas is  a better control of the image domain  being computed, increasing the precision. Another important advantage is the huge boost in performance gained when a parallelization of calculations is done for multiple values of the defocus parameter $f$.
 
\subsection{Computational complexity}

Let us analyze and compare the computational cost of evaluating the diffraction integral  using the ENZ theory and by the GRBF scheme, for a grid of values $\bm r \in \R^N$, $\bm \phi \in \R^N$ (so that with the notation of Section~\ref{evaluation_grbf}, $J=N^2$), and for $M$ distinct defocuses, gathered in a vector $\bm f \in \R^M$.

\subsubsection*{The ENZ--PB method}

We start with the basic ENZ method, corresponding to formulas \eqref{expression:ENZ:Utotal}-\eqref{enz_coefs}.  
Observe that in~\eqref{B_k} we must compute values of the Bessel functions of the first kind $J_\nu$, usually by their power series
\begin{equation}
\label{bessel_def}
\mathop{J_{\nu}\/}\nolimits\!\left(z\right)=\left(\tfrac{1}{2}z \right)^{\nu}\sum_{k=0}^{%
\infty}(-1)^{k}\frac{(\tfrac{1}{4}z^{2})^{k}}{k!\mathop{\Gamma\/}\nolimits\!%
\left(\nu+k+1\right)}
\end{equation}
(see e.g.~[formula (10.2.2)]\cite{NIST}). In practice this  series must be truncated at some term $B-1$. For the values of variables and parameters usually appearing in this context, we have determined experimentally that we can  ensure the truncation error of  $10^{-8}$ taking $B=15$. The computational complexity of computing the Gamma function at a value of $\Ord{10^{t}}$ is $\Ord{ t^2 \log(t)^2 }$. With  $B=15$ and with the usual Zernike fit (up to the 8th order) it is sufficient to take $t\leq 3$, which yields a maximum of $11$ operations for the Gamma function evaluation. Thus, for $N$ different values of $r$ we need roughly $\Ord{ 3N+25 }$ operation to evaluate a single term in the series \eqref{bessel_def}, and in consequence, the computational cost of evaluating a single Bessel function is $\Ord{3BN}$. 

With the same assumptions, the evaluation of each coefficient in \eqref{enz_coefs} is at most $\Ord{150}$ flops. Consequently, the complexity of computation of each $B_k$ defined in \eqref{B_k} is $$\Ord{ (p+1)(30B+186) }$$ operations. 

In order to evaluate expression \eqref{Vnm}, we  truncate the infinite series at some term $S-1$, in which case the number of operations required to evaluate $V_n^m$ by \eqref{Vnm} is 
$$ \Ord{ S((p+1)(30B+186)+2)+S+8 } = \Ord{ 30 B S (p+1) }. $$
Propagating this estimate to \eqref{Unm}, computed also at $N$ different values of $\phi$, we need about $\Ord{  30 B S (p+1)+2 N^2 M+2N}$ flops for its evaluation. 

In order to estimate the overall cost for expression \eqref{expression:ENZ:Utotal} we have to take into account the indices $n$ and $m$ therein: $n$ ranges from $0$ to a certain $n^*$ (the maximum radial order), and for each fixed $n$, $m$ takes even/odd non-negative integer values $\leq n$. Thus, at each level $n$, only $\Ord{n/2}$ Zernike polynomials are used (recall the restriction of $m\geq 0$ for ENZ formulas). Then, $p=\frac{1}{2}(n-m) = \Ord{n/4}$ in average.

In addition, the maximum radial order $n^*$ yields a total of $\frac{1}{2}n^*(n^*+1)$ Zernike polynomials, from which only those with the cosine are  used, so that the effective number of polynomials is approximately $K=\frac{1}{4}n^*(n^*+1)$.

Combining all the previous assumptions, the complexity of computing $U$ at a grid of $N\times N \times M$ points for $(r,\phi,f)$ by the ENZ formulas can be estimated to be
$$ cost(U_{ENZ-PB}) = \Ord{ K^{3/2} BNS + K \left( N^2M + NMS + BNS \right) },  $$
where $B$ and $S$ are the truncation values for the series in \eqref{bessel_def} and \eqref{Vnm}, respectively, and $K$ is the total number of Zernike polynomials included in the pupil representation \eqref{expression:pupil:ENZ}. 

\subsubsection*{The ENZ--BB method}

We can perform a similar analysis for the  improved ENZ scheme, corresponding to equations \eqref{expression:ENZ:coeff1}-\eqref{expression:ENZ:coeff4}. The only difference with respect to the previous discussion lies in the computation of $V_n^m$, so we only need to re-estimate the computational cost of $V_n^m$ in equation \eqref{Vnm_improved}.

With the same assumptions used before, the evaluation cost of coefficients in equations  \eqref{expression:ENZ:coeff1}, \eqref{expression:ENZ:coeff2}, \eqref{expression:ENZ:coeff3} is of at most of $110$ flops. 
Thus, the complexity of coefficients $w_{k,l}$ in formula \eqref{expression:ENZ:coeff4} is of $\Ord{330 p k}$ flops. Recalling the estimate of $\Ord{3BN}$ flops for each Bessel function $J_{\nu}$, the cost of the finite sum (that with index $l$) in expression \eqref{Vnm_improved} is $ \Ord{(k+p) ( 3 BN + 330 p k + 3N)} $.

The cost of evaluation of the Bessel spherical function $j_k$ is $\Ord{ 3BM + 3M} = \Ord{ 3BM }$. Thus, if the improved formula for $V_n^m$ is used with the  truncation of the infinite series at the term $S-1$, its complexity is
$$ \Ord{S \left(  3BM + BN + (S/2+p)( 3BN+165 pS )  \right) + 2 M }. $$
Assuming as before that $p=\frac{1}{2}(n-m) = \Ord{n/4}$ in average, it can be simplified to
$$ \Ord{ nS^2 \left( 21S + 10 n \right) + 1.5 BNS^2 + \frac{3}{4} nS  B N + NMS + BMS }. $$

Since the rest of the calculations is the same as in the basic ENZ scheme,  the complexity of evaluating $U$ in a grid of $N\times N \times M$ points   $(r,\phi,f)$ by the improved ENZ formulas \eqref{expression:ENZ:coeff1}-\eqref{expression:ENZ:coeff4} is
\begin{align*}
 cost(U_{ENZ-BB}) = & \mathcal{O} \Bigl(  K^2 S^2 + K^{3/2} \left(S^3+BNS \right) +  \\
& + K \left(N^2M + NMS + BNS + S^3+S^2BN \right) +  N^2 M \Bigr),
\end{align*}

where $B$ and $S$ are the truncation values for the series in \eqref{bessel_def} and \eqref{Vnm}, respectively, and $K$ is the total number of Zernike polynomials included in the pupil representation \eqref{expression:pupil:ENZ}.

  \subsubsection*{The ENZ--EBB method} 
  
  This scheme is almost the same as in the ENZ--BB method, except for the rearrangement of the computation of the term $V_n^m$. Thus, only the complexity estimate of this term needs to be updated. 
  
  With the main basic assumptions made before, and since the Gamma function is involved in the Wigner 3j-symbols, the number of operations needed to compute each coefficient $A_{n_1,n_2,n_3}^{m_1,m_2,m_3}$ is of about 250. Recalling the estimate of $\Ord{3BN}$ flops for each Bessel function $J_{\nu}$, the cost of the innermost sum in expression \eqref{ENZ_EBB_Vnm} (the finite sum of index $h$) is $ \Ord{3 nBN + 250 n + 3N)} $.
  
  As in the previous case, the cost of evaluation of the Bessel spherical function $j_k$ is $\Ord{ 3BM }$ and then the complexity of expression \eqref{ENZ_EBB_Vnm}, when truncating at the term $S-1$, is approximately 
  $$ \Ord{S \left(  3BM + n( 3BN +  26 B + 8 N )  \right) + MN }. $$
  
  Combining the cost of $V_n^m$ with that of the rest of the ENZ BB scheme studied above, the complexity of evaluating $U$ in a grid of $N\times N \times M$ points   $(r,\phi,f)$ by the enhanced ENZ BB formulas \eqref{ENZ_EBB_Vnm}-\eqref{ENZ_EBB_A} is
  \begin{align*}
  cost(U_{ENZ-EBB}) = & \mathcal{O} \Bigl( K^{3/2} \left(BNS + NS + BS \right) + \\
  	& + K \left(N^2M + NMS + BNS + BMS \right) + N^2 M  \Bigr), 
  \end{align*}
  where $B$ and $S$ are the truncation values for the series in \eqref{bessel_def} and \eqref{Vnm}, respectively, and $K$ is the total number of Zernike polynomials included in the pupil representation \eqref{expression:pupil:ENZ}.

  \subsubsection*{The GRBF method}

Now we switch to the computational complexity of the GRBF based formulas described in Section~\ref{sec:calculation}.
Again, we are interested in evaluating expression \eqref{U1truncated} at a grid $\bm r \in \R^N$, $\bm \phi \in \R^N$ and $\bm f \in \R^M$, with the additional assumption \eqref{lambdaequal}, so that formula \eqref{U1truncated_withH} is used.

 According to the discussion in Section~\ref{evaluation_grbf}, once the cut-off parameter $S$ has been chosen, the double recurrence \eqref{rec_hatmk}-\eqref{rec_hatmkInit} must be evaluated for $s \leq S$, with an estimated computational cost of $\Ord{ 4 SM }$ flops. After that, the  calculation  of $\Omega$ in \eqref{omegaBis} takes about $\Ord{7N^2K}$ flops. The computational load of finding $\bm d$ by  \eqref{rowD} is almost negligible, requiring only $\Ord{4K}$ flops. Recall that  $\Omega$ and $\bm d$ are computed only once during the initialization of the algorithm. 

The computational complexity of evaluating each term $H_s$ in the mesh is of  $\Ord{ K+2N^2 }$ operations, yielding a total of $\Ord{ KS + 2SN^2 }$ flops for the evaluation of all the required $H_s$ terms. 

Summarizing, the cost of  calculating $U$ in \eqref{U1truncated}  according to Algorithm 1 is of  $$\Ord{2 N^2MS + 7 K N^2 + N^2M }.$$

Recall also that the minimal estimated computational complexity of the FFT2 method for a single value of $f$, even for optimal implementation, is of $\mathcal{O}(N^2 \log(N) )$, which corresponds to the cost of the FFT, the most computationally demanding part.

Table \ref{table1} summarizes the leading terms of the computational cost of each method, with the assumption that the same values of $B$ and $S$ for all semi-analytical methods have been chosen. A quick comparison shows that the GRBF approach is much more efficient than the ENZ theory, especially for a large amount of values for $f$. The FFT2-based method seems to be of similar complexity to GRBF, but in practice the number of sample points $N$ needed to achieve a reasonable accuracy for FFT2 will be much larger than that required for GRBF, which in our experiments was set to $400$ (see the description in the next section).  

\begin{table} 
\centering \begin{tabular}{c|c|c}
    Method & Complexity (single $f$ )& Complexity (vector of $f$) \\
\hline
    FFT2   	& $ \mathcal{O}(N^2 \log(N) )$     	& $ \mathcal{O}(M N^2 \log(N)   )$ \\
    ENZ--PB    	& $\mathcal{O}( N^2 K + N K^{3/2} )$  	& $\mathcal{O}( N^2 M + N^2 K M + N K^{3/2} )$   \\
    EN--BB & $\mathcal{O}(  N^2 K + N K^{3/2} + K^2 )$  	& $\mathcal{O}( N^2 M + N^2 K M + N K^{3/2} + K^2 )$   \\
ENZ--EBB &   $\mathcal{O}(  N^2 K + N K^{3/2} + K^{3/2} )$  &   $\mathcal{O}( N^2 M + N^2 K M + N K^{3/2} + K^{3/2} )$ \\
    GRBF   	& $\mathcal{O}( N^2 K)$     			& $\mathcal{O}( N^2 M + N^2 K )$ \\
\end{tabular}
\caption{Estimates of the minimal computational complexity of the methods when evaluating $U$ at a $N \times N$ grid of nodes $(r,\phi)$ and for $M$ values for the defocus parameter $f$, using $K$ functions in the corresponding series expansions (for ENZ and GRBF).} \label{table1}
\end{table}

\subsection{Execution time} 

Since the complexity estimates give only a rough idea  of the computational demand of a method, we have looked also at the execution time, which  is a simpler and a more informative assessment.  
 
For the beginning, we run the ENZ-based and the GRBF algorithms evaluating $U$ at an $100 \times 100$ mesh of nodes for a single value of the parameter $f$, recording the execution time in dependence of the number of functions used in the corresponding series expansions.  The experiments were performed on a standard PC running Matlab v.~8.1. Figure \ref{figure:times} shows the results, along with the corresponding regression curves.
The slope of the regression line is approximately $0.0612$ seconds/function for ENZ-PB, $0.0144$ for ENZ-EBB and $0.0028$ seconds/function for GRBF; although the dependence for ENZ-BB is clearly non-linear, the average slope of the regression curve in the studied range for this case is $0.2151$. Thus, the GRBF approach is at least $5$ times faster in comparison with its closest competitor, the ENZ-EBB method. 
For comparative purposes, the execution time to evaluate function $U$ numerically making use of the two-dimensional fast Fourier transform was of approximately $0.25$ seconds, matching the execution time for ENZ using about $12$ Zernike terms, or for GRBF with approximately $90$ Gaussian RBFs.

\begin{figure} 
\centering \includegraphics[width=0.85 \linewidth]{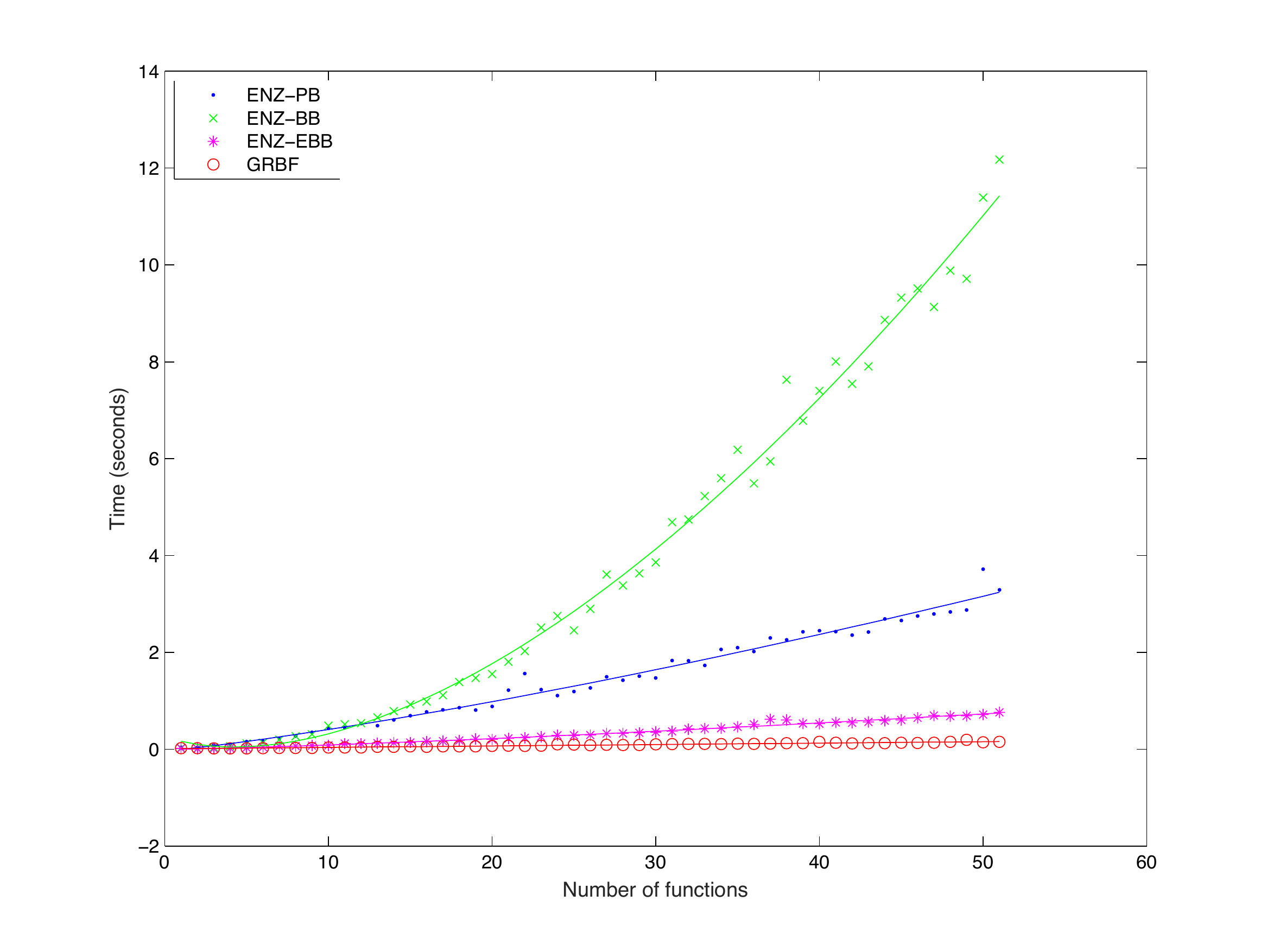}      
 \caption {Dependence of the computation time from the number of functions used in the description of the complex pupil function, by means of the ENZ--PB (formulas \eqref{Vnm}--\eqref{B_k}), the ENZ--BB (formulas \eqref{Vnm_improved}--\eqref{expression:ENZ:coeff3}), the ENZ--EBB (formulas \eqref{ENZ_EBB_Vnm}--\eqref{ENZ_EBB_A}) and the GRBF methods.}
\label{figure:times}
\end{figure}

In order to achieve a reasonable comparison, in the rest of our experiments we implemented a standard setting for each of the methods. Namely, the integral $U$ was evaluated at a $100 \times 100$ regular grid of nodes by the semi-analytic methods, and at a $512 \times 512$ regular grid for the FFT2-approach (this is a realistic size needed to overcome aliasing and obtain accurate results). Moreover, we used $K=K_{GRBF}=400$ functions in the formula~\eqref{U1truncated} (corresponding to the $20\times 20$ regular grid of centers, as described above) and the cut-off parameter $S=60$, while for the ENZ methods it was observed that a total of $K=K_{ENZ}=45$ Zernike polynomials (up to the 8th order polynomials) was the optimal. This is actually about twice the number of terms recommended by \cite{Braat_etal2002,Janssen2002}. Higher order polynomials, according to our experiments, did not contribute to higher accuracy, while causing ill conditioning of the computations.  

With these settings, we also compared the execution time of these methods as a function of the length of the vector of defocus parameters $\bm f$. The results appear in Figure \ref{figure:times2}, along with the regression lines for each scheme. The values of the slopes are approximately $0.139$ for ENZ-PB, $0.064$ for ENZ-BB, and for ENZ-EBB and $0.004$ for GRBF, all in seconds per size of $\bm f$. This means that ENZ-PB is about $35$ times slower than GRBF when calculating $U$ for many of values of f simultaneously, while ENZ-BB and EBB are about $16$ times slower than GRBF, even though the number of Gaussian functions used was much higher than the number of Zernike polynomials.

\begin{figure} 
\centering \includegraphics[width=0.85 \linewidth]{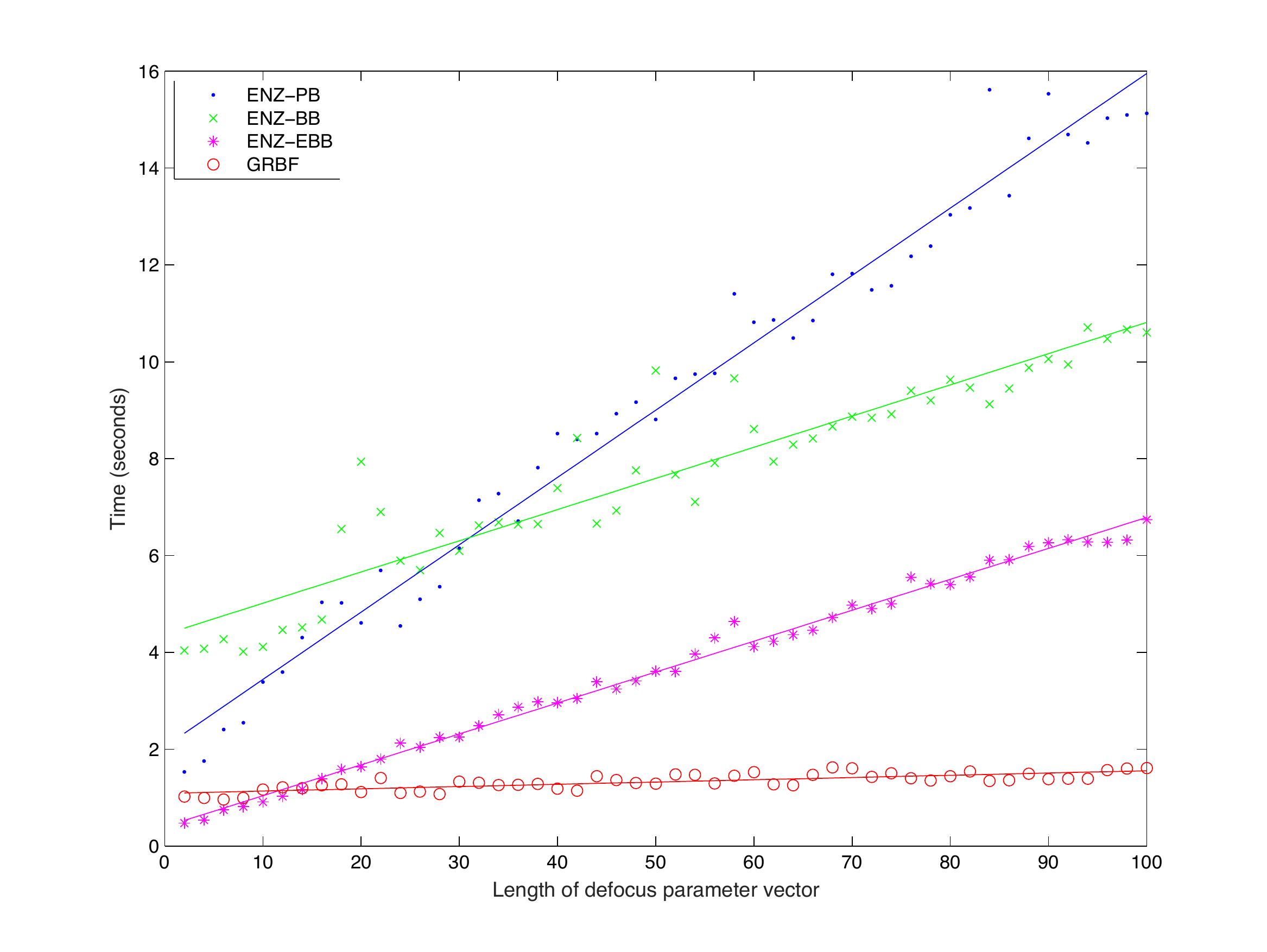}      
 \caption {Execution time according to the number of different defocus parameters used in the complex pupil function. A fixed number of functions ($ 400$ for GRBF and $ 45$ for ENZ) was used for each value of $f$. } 
\label{figure:times2}
\end{figure}

\subsection{Accuracy}

The original ENZ theory, although representing a big step forward, had some limitations that had to be addressed. The obvious one is the use of only even terms in the Zernike expansion of the complex pupil function, which restricts it to the symmetric wavefront errors. Some other, less evident, problems lie in the core of the mathematical properties of the ENZ explicit formula for $U(r,\phi; f)$. This is an infinite series of terms, each of them a finite linear combination of Bessel functions, and each new  Zernike term added to the expansion of the pupil function \eqref{pupil} increases the complexity of the terms. The series is slowly convergent, especially for larger values of $f$, requiring a truncation with a large number of terms  depending on  $f$  (it is recommended to use $3|f|+5$ terms, according to \cite{Braat_etal2002,Janssen2002}).

Another issue in evaluating the ENZ expressions is the accuracy. The terms of the infinite series with even and odd orders form sign-changing sequences, which increases the risk of the cancellation errors. This phenomenon can be illustrated by the following experiments: in the case when the wavefront is given only by a positive $Z_2^2$ horizontal astigmatism, the evaluation of, say, the imaginary part of $U$ at a point with radial coordinate $r=0.9$ consists in adding a finite alternating sequence, with two dominant terms of approximately $0.423$ each, with opposite signs, but whose absolute values differ in $3\times 10^{-5}$. This shows that these calculations, if not well organized, can yield a loss of precision in about 5 significant digits. Last but not least, the ENZ formulas contain binomial numbers that must be evaluated with care in order to avoid overflow.

Although the enhanced scheme of evaluation of the ENZ formulas \eqref{ENZ_EBB_Vnm}--\eqref{ENZ_EBB_A} represents a significant improvement in speed, as the experiments show, still the danger of the cancellation errors is looming in the horizon due to the alternating sign terms in \eqref{ENZ_EBB_Vnm}.

In comparison, the error estimates from Section~\ref{sec:error estimates} show that the convergence in \eqref{U1} is very fast, and only a very reasonable number of terms is required for a precise evaluation.

In order to assess accuracy, we carried out a number of numerical experiments where we calculated $U$. For the ideal wavefront ($\Phi\equiv 0$) and zero defocus ($f=0$), the results were discussed in \cite{RMI2014}, where it was shown that  the two semi-analytic methods performed similarly, although GRBF calculations were done at a much lower cost.

More informative is to use a synthetic wavefront  described by a combination of Zernike polynomial terms and exponential function, seeking a ``fair'' comparison between the ENZ (``Zernike--oriented'') and the GRBF (``exponentially--oriented'') approaches. Such class of functions allows also for an easy modeling of low and high-order oscillations. For experiments with a wavefront comprised of basically low-frequency oscillations see~\cite{RMI2014}. Here we have used the function given by
\begin{equation} \label{wavefrontDef}
\begin{split}
\Phi(\rho, \theta) &=   0.6 \, Z_5^3(\rho, \theta) - 0.4 \, Z_4^4(\rho, \theta)  -    0.3\,  Z_5^5 ( \rho, \theta)   + 0.25 \, Z_4^2(\rho, \theta) \\ & +0.25 \, Z_6^4(\rho, \theta)    -    0.15\,  Z_8^4 ( \rho, \theta)  +      0.4\, g(\rho \cos \theta ,\rho \sin \theta ; -0.3,0,15) \\ & -      2\, \left[ g(\rho \cos \theta ,\rho \sin \theta; 0.5,0.3,10)    + g(\rho \cos \theta,\rho \sin \theta; 0.5,-0.3,10) \right],
\end{split}
\end{equation}
where $Z_n^m$ are the (orthonormal) Zernike polynomials and
$$
g(x,y; a, b, \lambda) = \exp{ \{  - \lambda [ ( x - a )^2 + ( y - b )^2 ] \} },
$$
as well as its ``contaminated'' version, where we have added to $\Phi$ a normally distributed random noise of the form \texttt{0.5 * randn( )} at a grid of $100 \times 100$ equally spaced points (see Figure \ref{figure:wavefront}).

\begin{figure}
\centering \includegraphics[scale=0.4]{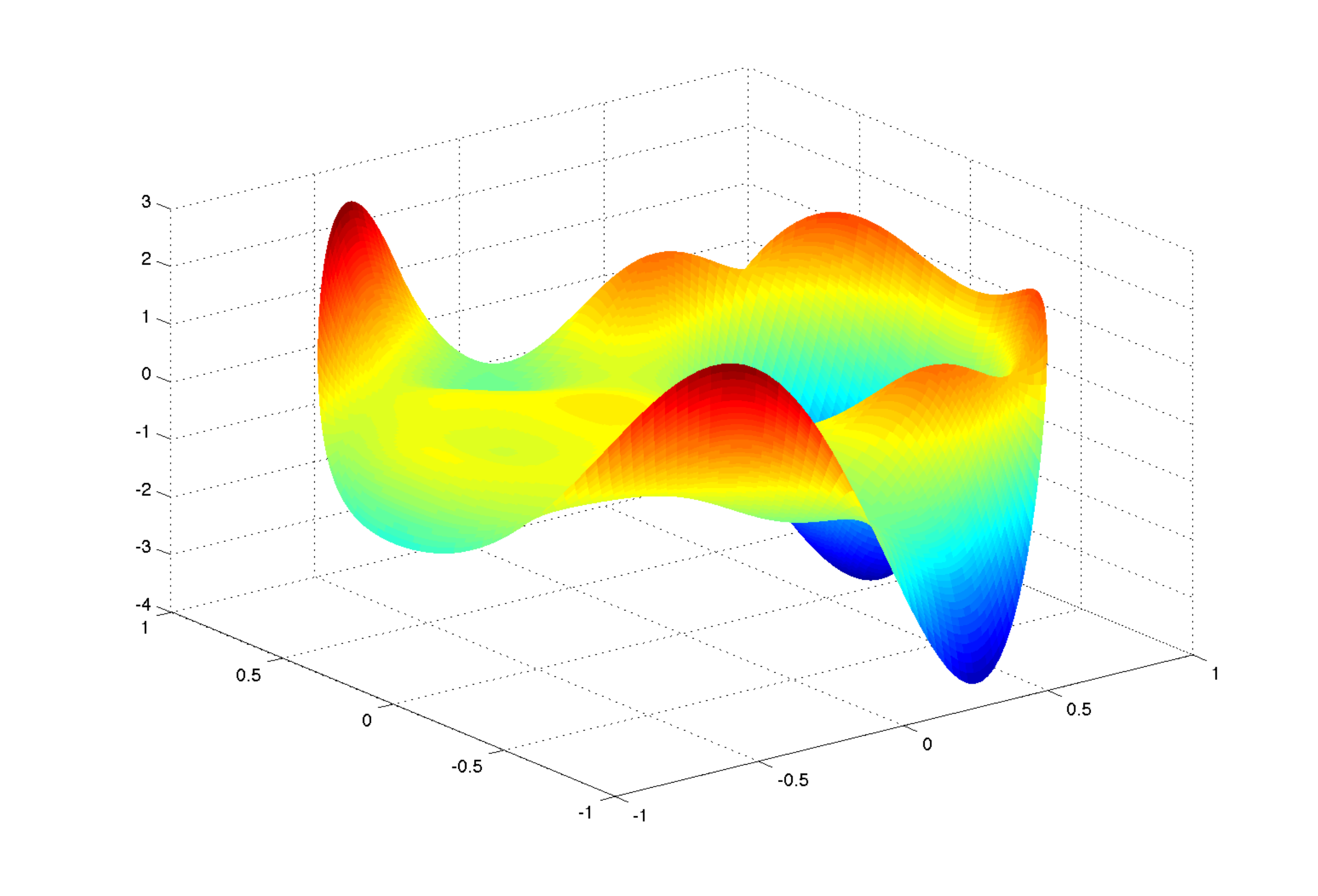} \\
\includegraphics[scale=0.4]{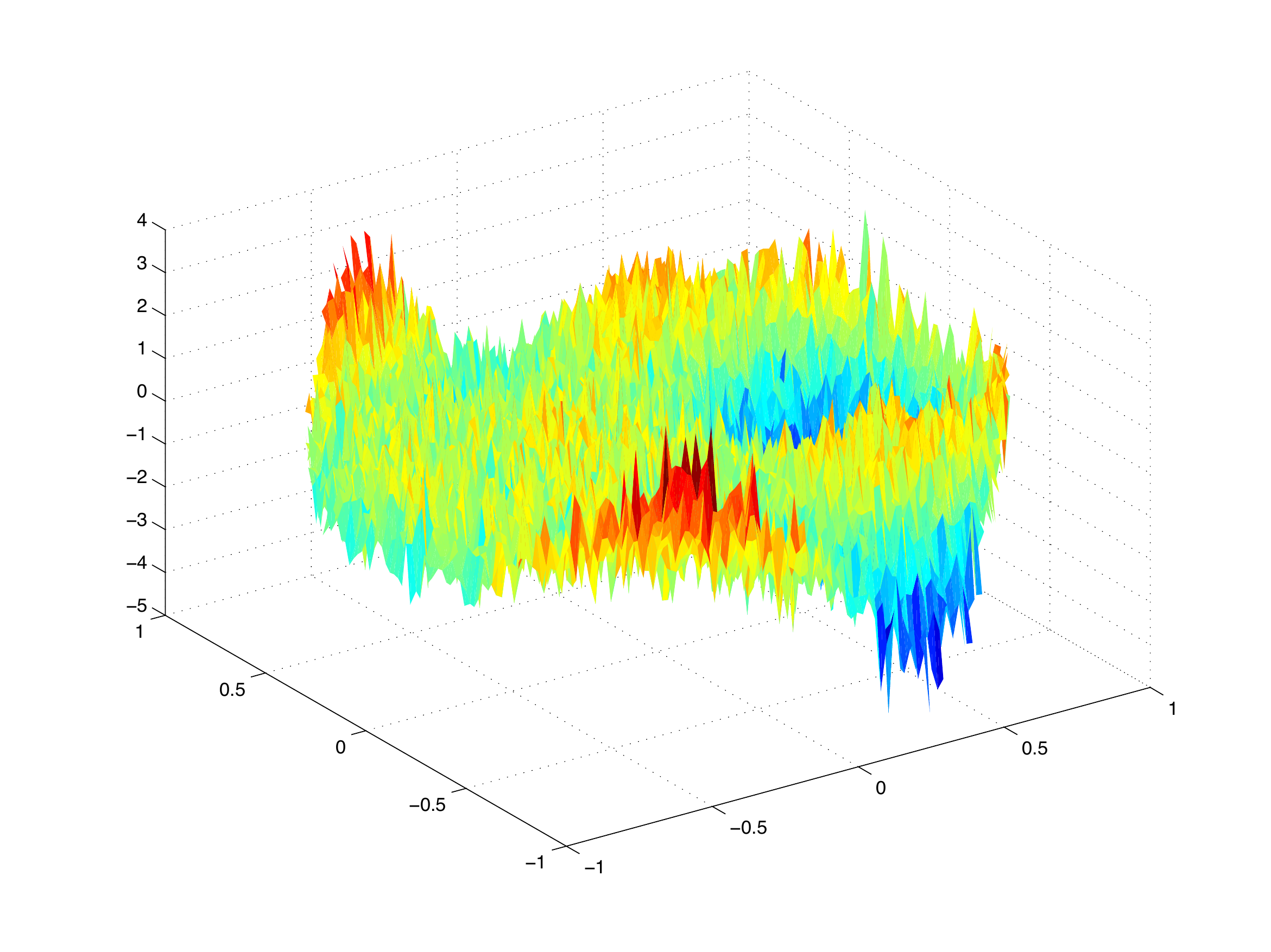}  
 \caption {3D plot of the synthetic wavefront function defined in \eqref{wavefrontDef} (upper picture) and of the same wavefront contaminated by a white noise $0.5 \times  N(0,1)$ (bottom).}
\label{figure:wavefront}
\end{figure}
For these wavefronts we calculated the diffraction integral \eqref{dif_int} for different values of $f$ by quadrature using the scientific software \emph{Mathematica} with extended precision (with the options \texttt{PrecisionGoal} set to 8 and \texttt{WorkingPrecision} to 16). These values of $U$ (regarded as ``exact'') were compared with the calculations performed by FFT2 and by two semi-analytic approaches discussed here, for which the diffraction integral was evaluated by all methods in a grid of $256 \times 256$ equally spaced points in the square $[-2,2] \times [-2,2]$.

Seeking a fair comparison of the computation of the integrals by both semi-analytic methods we 
fitted the pupil function using the first 200 Zernike polynomials for the ENZ, while for the GRBF the approximation was performed by the linear combination of $20 \times 20$ Gaussian functions, with the parameter $\lambda=16$.  With this choice of the number of functions in each basis an approximately similar  goodness-of-fit to the wavefront data with noise is reached (the RMS error aproximately $0.09$ for both methods). 

As an illustration of comparative performance of the computation methods for different values of the defocus parameter $f$ we plot in Figure~\ref{figure:PSFseveralF} the values of the normalized PSF, again  for the simulated wavefront \eqref{wavefrontDef}, along the horizontal line ($\phi=0, \pi$ and $r\in [0,1]$), setting $f=0$, $f=2\pi$ and $f=-2\pi$, as in a numerical experiment described in \cite{Braat_etal2002}. In the case $f=0$ the dotted line (computed by the ENZ method) is not observed because it matches exactly the other two curves, found using quadrature and the GRBF algorithm. We see that  for larger values of defocus our procedure outperforms the alternative methods.

\begin{figure}
\centering 
\hspace{-3mm} \begin{tabular}{cc}
\includegraphics[width=0.47 \linewidth]{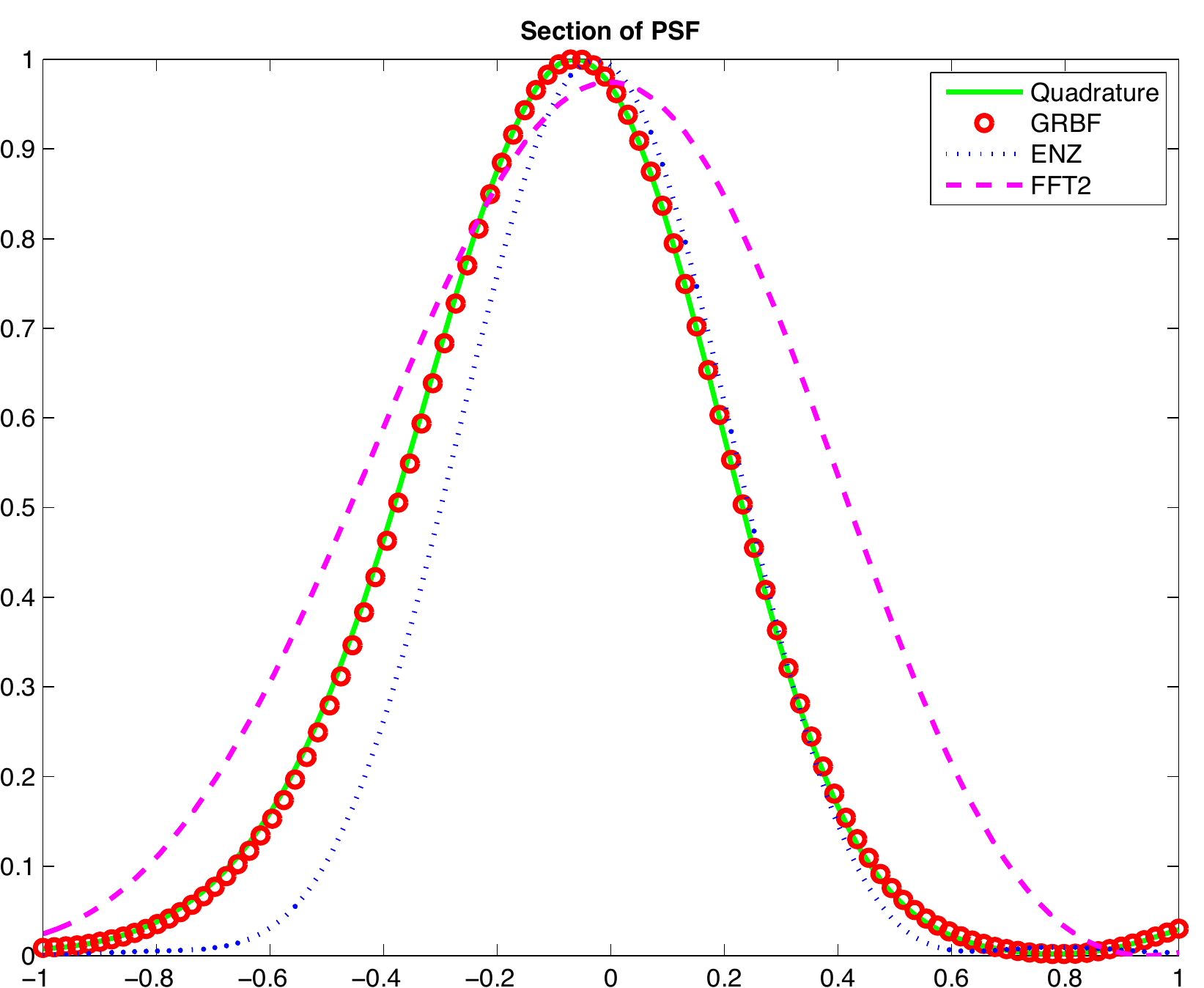} & \includegraphics[width=0.47 \linewidth]{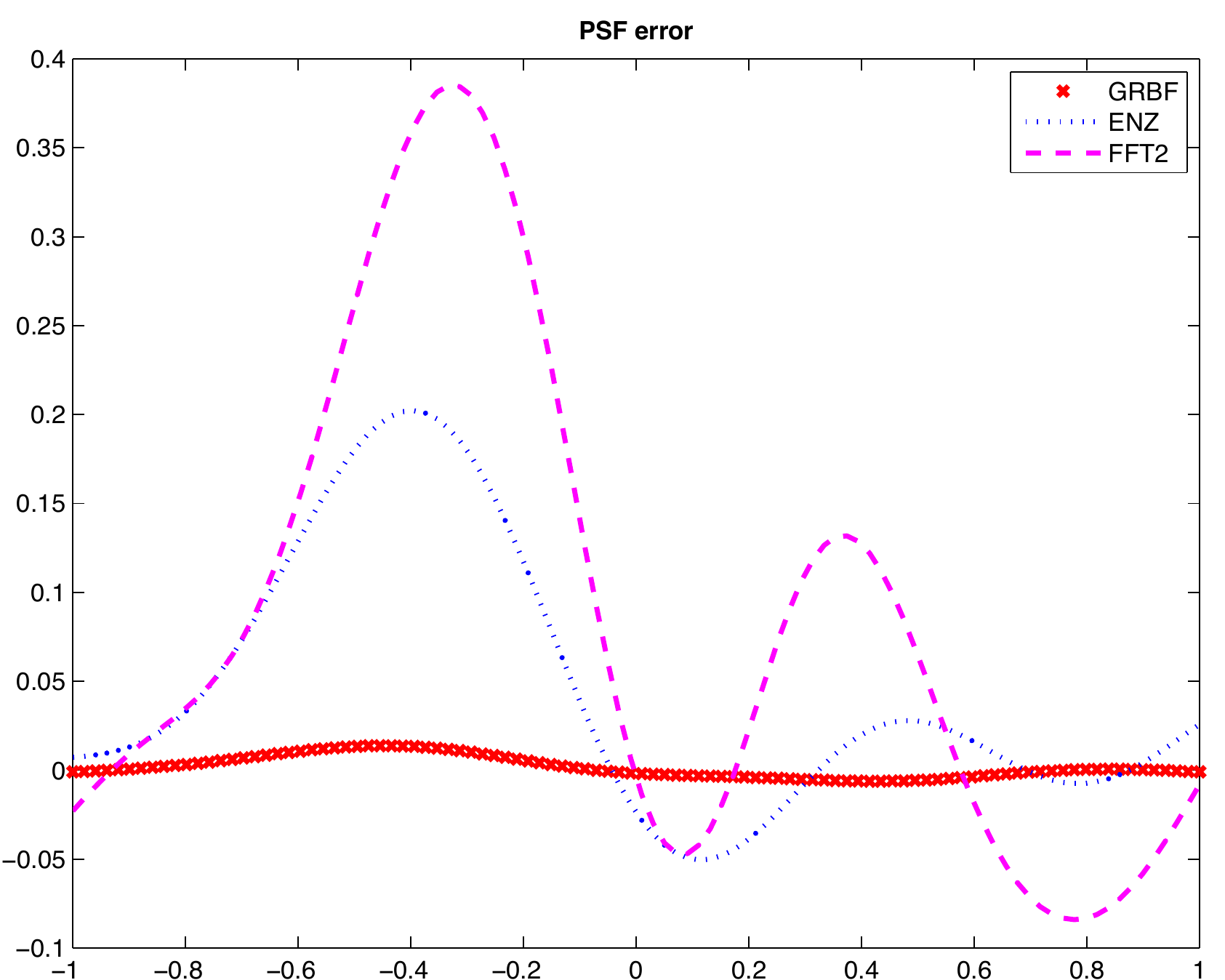} \\
\includegraphics[width=0.47 \linewidth]{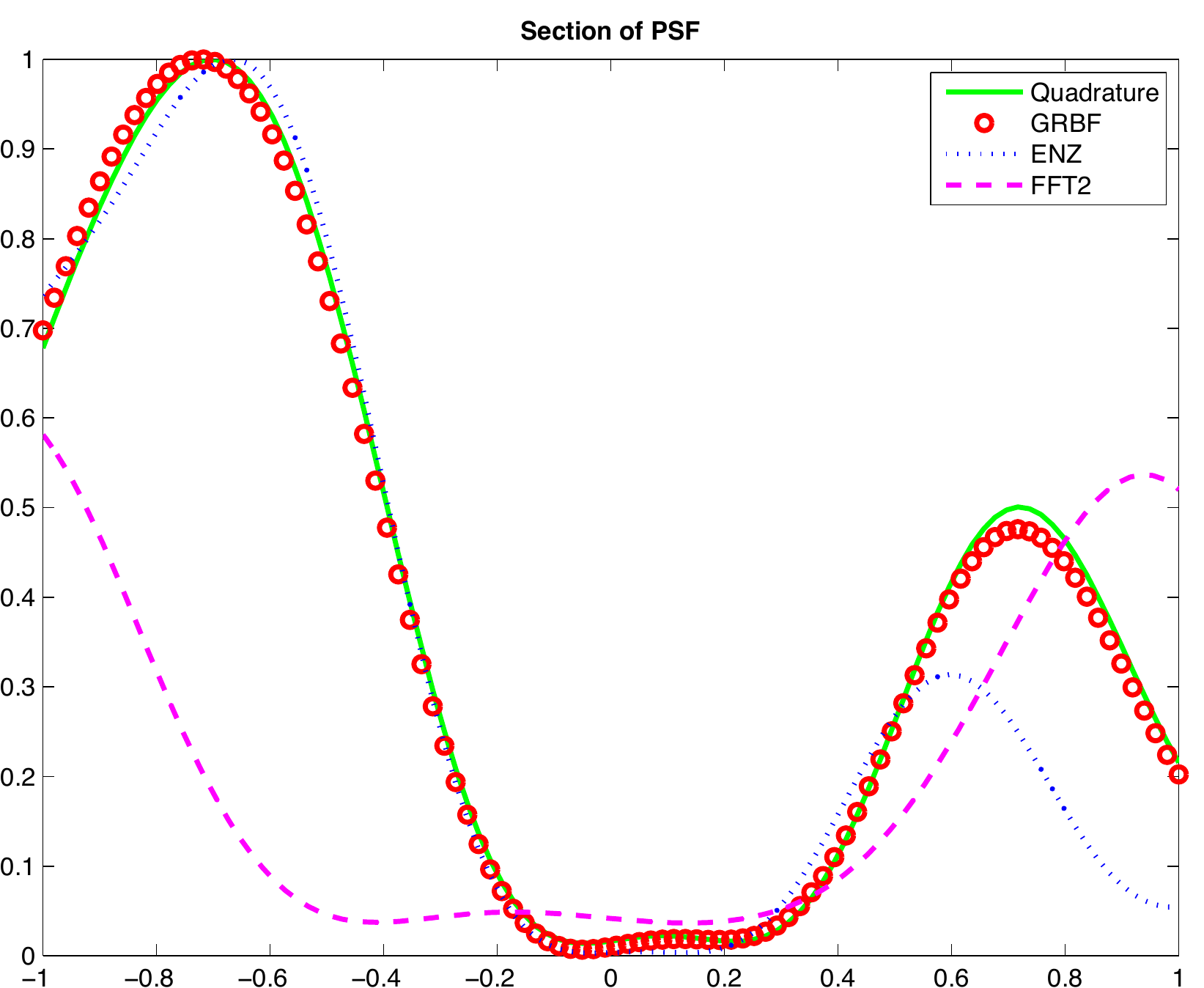} & \includegraphics[width=0.47 \linewidth]{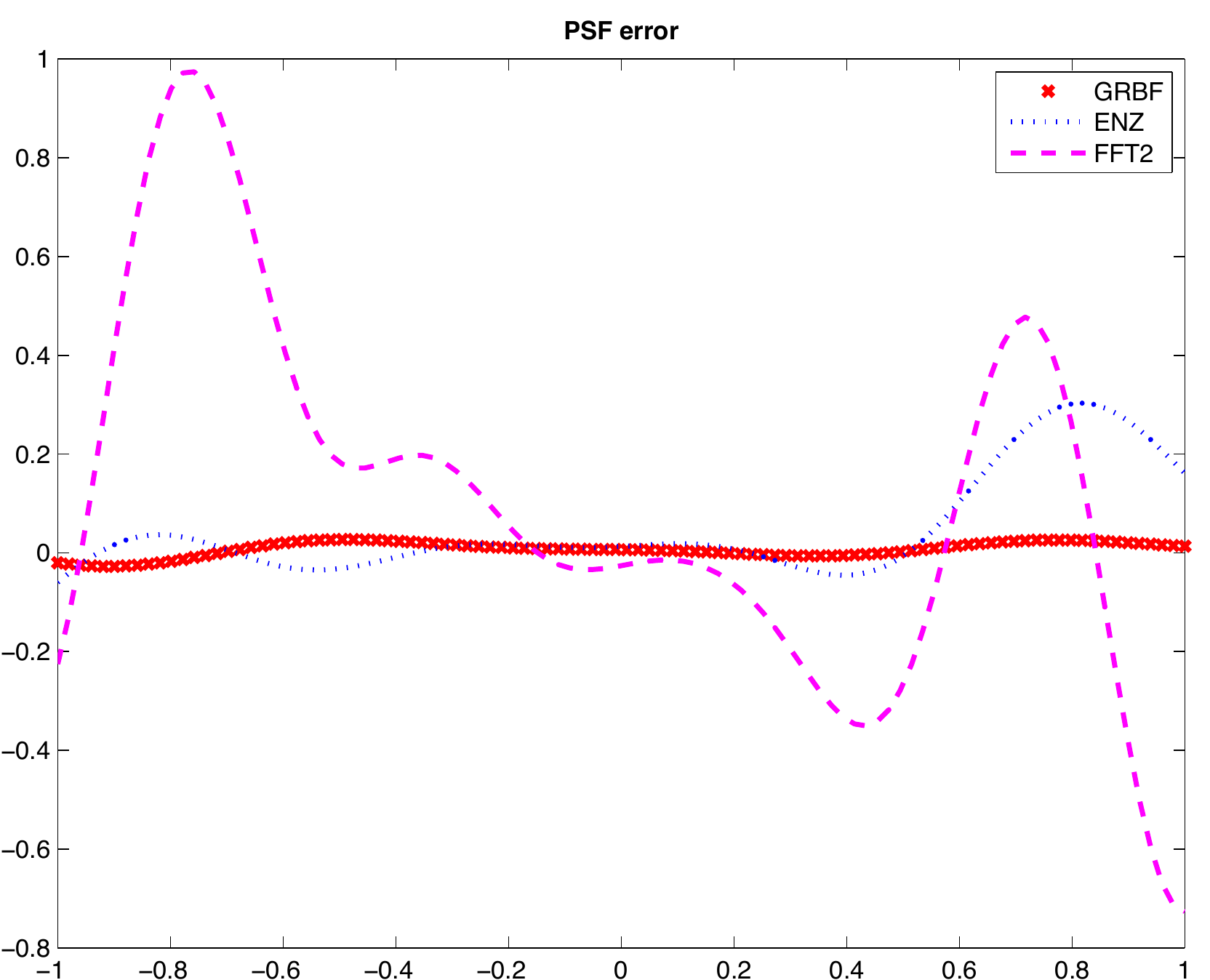} \\
\includegraphics[width=0.47 \linewidth]{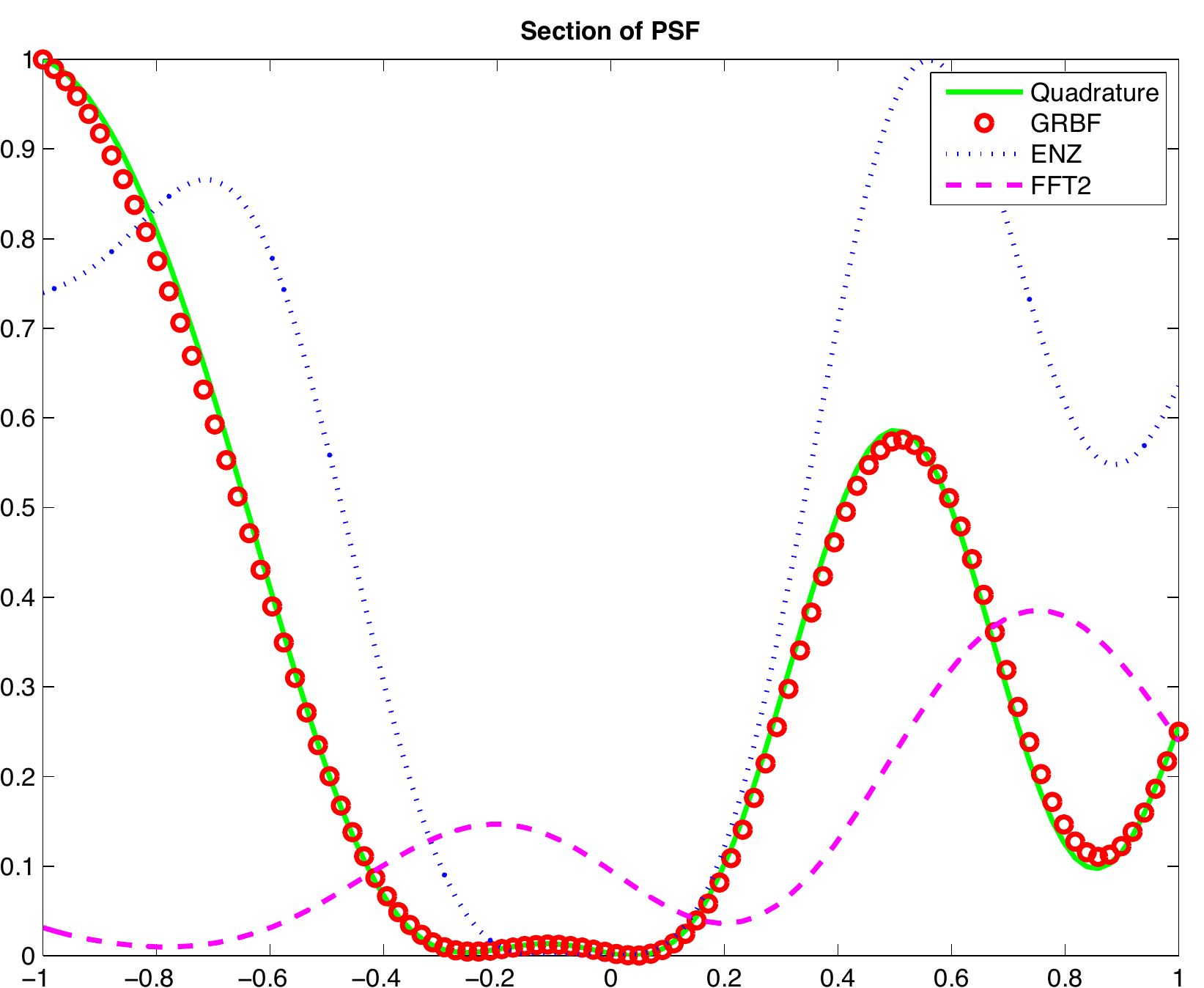}  & \includegraphics[width=0.47 \linewidth]{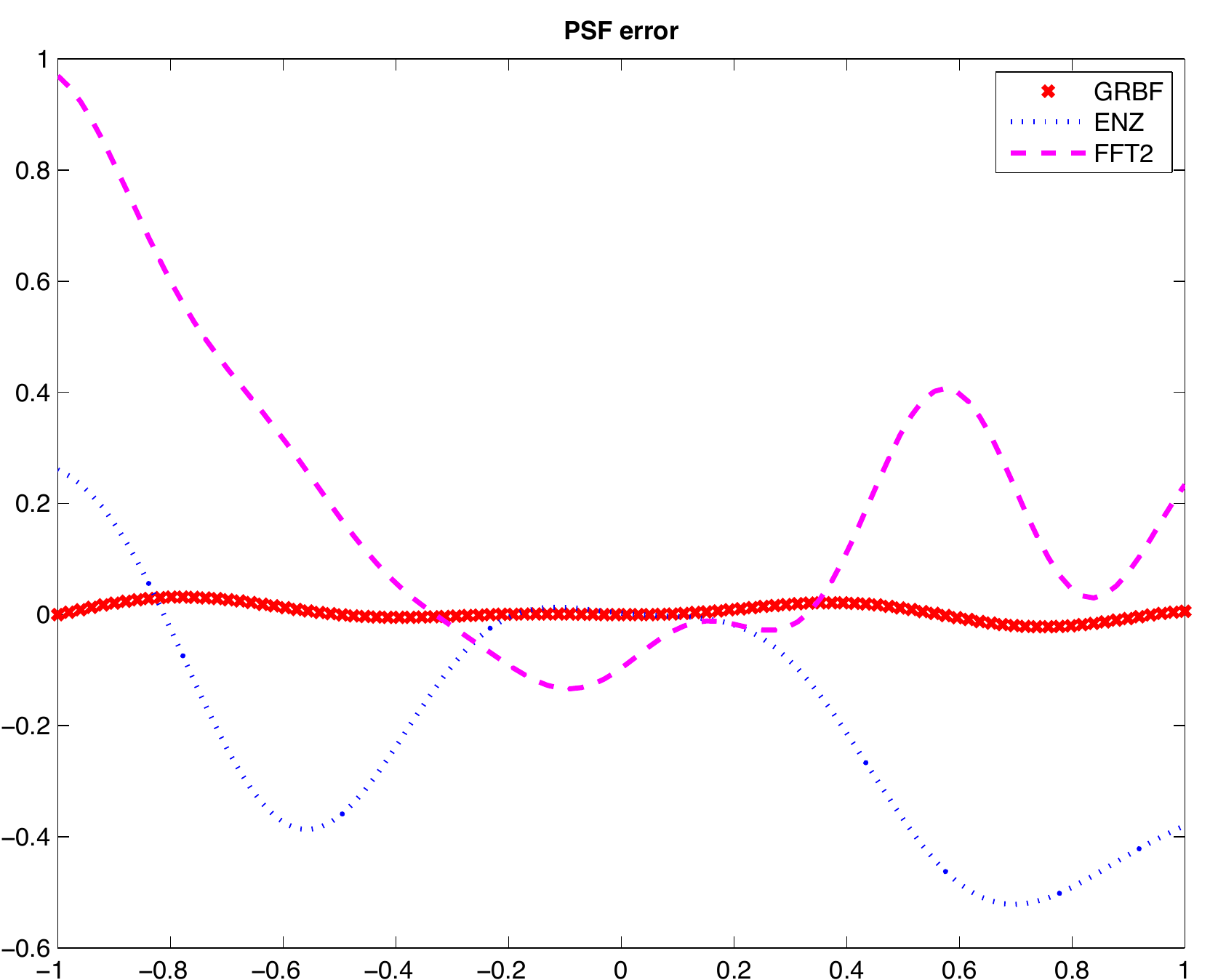}
\end{tabular}
 \caption {Values of the normalized PSFs and the corresponding errors (difference between the PSF computed by quadrature and by each other method, as explained in the text) for the wavefront~\eqref{wavefrontDef} $+$ noise, along the horizontal diameter of the unit disk, calculated for each method, for $f=0$ (top row), $f=2\pi$ (middle) and $f=-2\pi$ (bottom row).}
\label{figure:PSFseveralF}
\end{figure}

In our last experiments we analyzed the performance of all methods for a non-circular geometry. Namely, for the same synthetic wavefront as before we set an elliptic pupil, taking in~\eqref{pupil} as $A(\rho, \theta)$ the characteristic function of the set (in cartesian coordinates)
$$
x^2+\frac{y^2}{(0.7)^2}\leq 1.
$$
The results are illustrated in Figure~\ref{figure:PSFelliptic}. The poorer performance of the ENZ method in these examples is due probably to a less accurate fit of the complex wavefront by Zernike polynomials over a non-circular domain.

\begin{figure}
\centering 
\begin{tabular}{lr}
  \hspace{-5mm} \includegraphics[width=0.5 \linewidth]{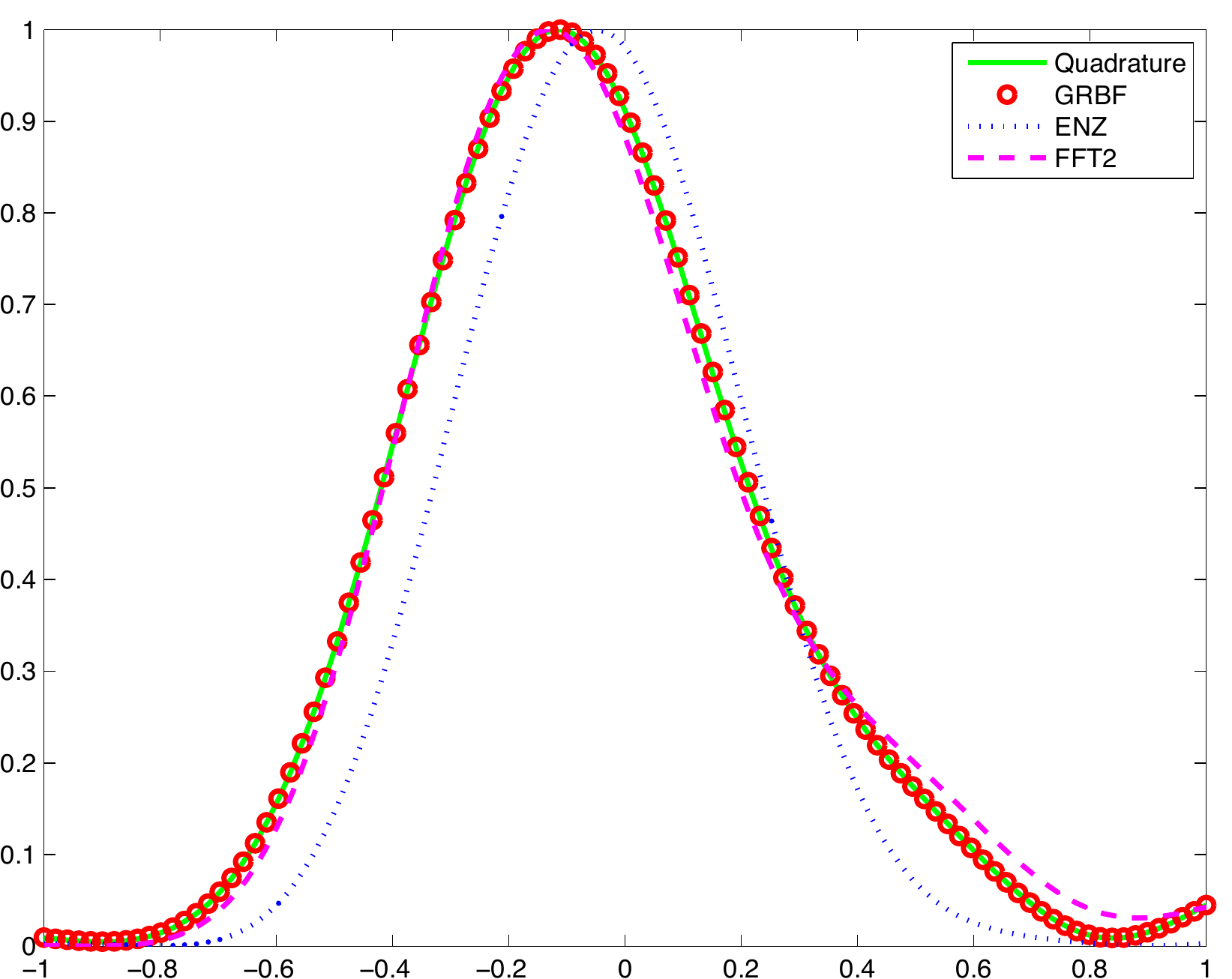} & \hspace{-3mm}
\includegraphics[width=0.5 \linewidth]{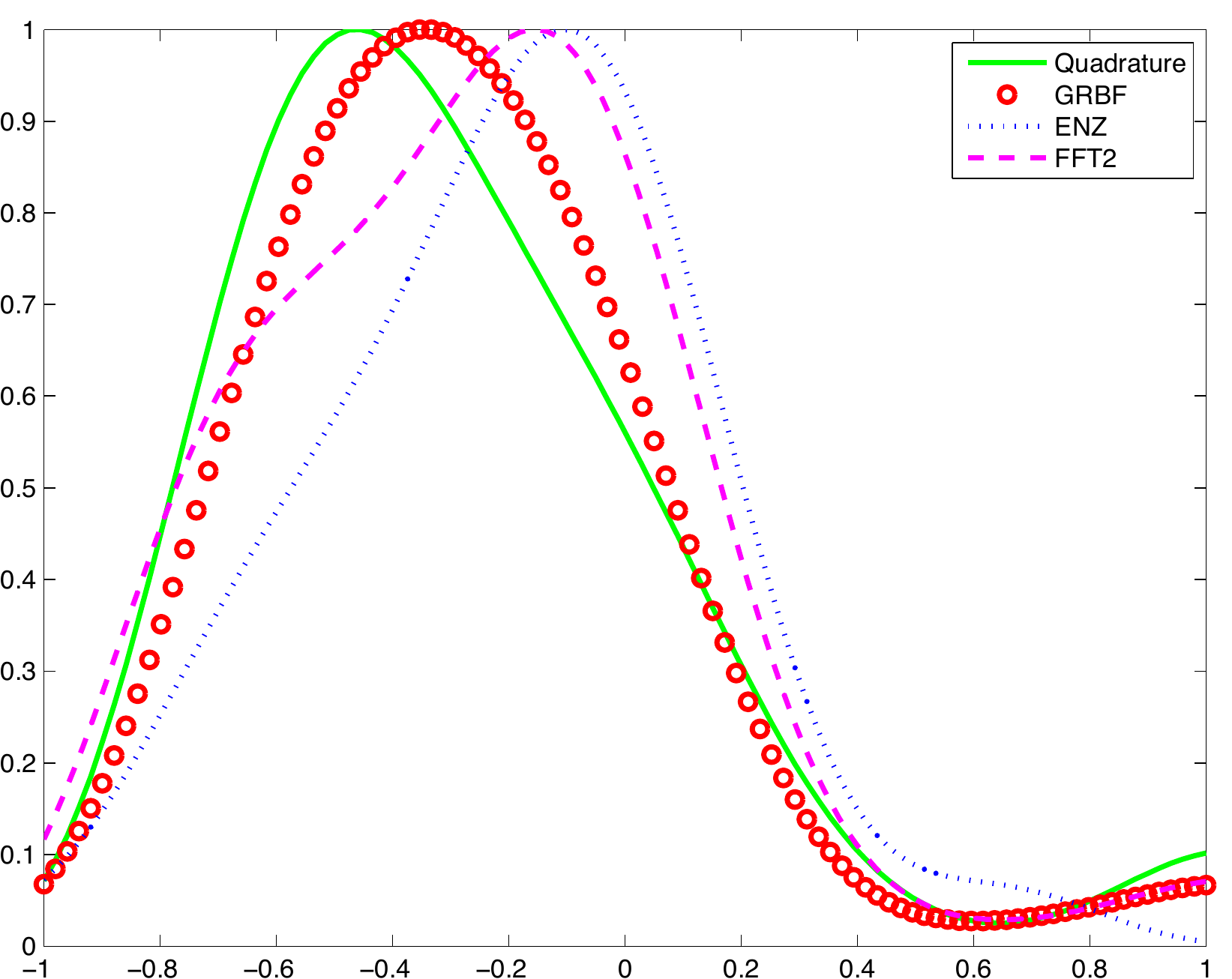}
\end{tabular}
 \caption {Values of the normalized PSFs for the  synthetic wavefront  with an elliptic pupil along the horizontal diameter of the unit disk, calculated for each method, for $f=0$ (left)  and $f=\pi$ (right).}
\label{figure:PSFelliptic}
\end{figure}

\section{Conclusions}

A new procedure for computing 2D Fourier-type integrals, and in particular, the diffraction integrals with variable defocus has been put forward. We have discussed some error bounds and developed an efficient scheme for parallel evaluation of these integrals in a grid of points and for a vector of defocus parameters.
 
It should be noted that when calculating the Fourier-type integrals considered in this paper by replacing the original integrand by its approximant, special care should be put in the quality of approximation. It is easily seen that proximity in an $L^2$ or even uniform norm is not enough to guarantee small errors (due to a high sensitivity of these integrals to oscillations), and Sobolev-type norms should be used. The higher accuracy we achieve in the approximation step, the better results will be obtained for the integral calculation. In our case, a linear least squares fit with Tikhonov regularization gave the best results.
 
It is worth pointing out that the semi-analytic paradigm for computed Fourier-type integrals can be used with other bases, that might give other computational advantages, such as Sobolev orthogonal polynomials or polynomials introduced in \cite{Forbes}, as well as for the computation with the more general focal factors, like in \eqref{defocussing}.

The proposed here GRBF-based approach has been compared with the two existing procedures, i.e., the 2-dimensional fast Fourier transform and the extended Nijboer-Zernike theory. The results of the comparison show that the new scheme is very competitive, providing higher accuracy and speed. Among other advantages of our method we can mention the following:
\begin{itemize}
\item a relative robustness of the computation with respect to the underlying geometry of the pupil function. The GRBF have an almost local character (especially, for higher values of the shape parameter), and the fit that the linear combination of such functions provides is less affected by the shape of the pupil;
\item the existence of the shape parameter in the GRBF provides more flexibility to small details, and allows for an easy implementation of a multi-resolution scheme, especially in the case of existence of subdomains with different complexity of the integrand. Since each function used for approximation of the pupil function enters the final expression linearly, one can use two or more layers of GRBF to fit the residual error consecutively using different sets of centers and different shape parameters in order to improve the accuracy of the results; 
\item in the case of the calculation of diffraction integrals of the form \eqref{dif_int}, the increase of the computational cost  for a vector of values of the defocus parameter is practically negligible, providing a substantial increase in the performance with respect to the other techniques. This is a reliable and efficient way of obtaining the through-focus characteristics of an optical system at higher resolutions in reasonable time. 
\end{itemize} 
 
In conclusion, the proposed GRBF approach allows calculating through-focus point spread function at a very low computational cost for an arbitrarily selected set of the defocus parameters. This is particularly attractive in those applications in which evaluation of through-focus characteristics of an optical system is required. They include wavefront sensing, phase retrieval, lithography, microscopy, extreme ultraviolet light optics, digital holography and physiological optics. We believe that the GRBF-based method has an even wider scope of application in mathematical imaging and vision.

\section*{Acknowledgements}
 
 The first and the second authors (AMF and DRL) have been supported in part by the Spanish Government together with the European Regional Development Fund (ERDF) under grants
 MTM2011-28952-C02-01 (from MICINN) and MTM2014-53963-P (from MINECO), by Junta de Andaluc\'{\i}a (the Excellence Grant P11-FQM-7276 and the research group FQM-229), and by Campus 
 de Excelencia Internacional del Mar (CEIMAR) of the University of Almer\'{\i}a.  Additionally,  DRL was supported by the FPU program of the Ministry of Education of Spain.

This work was completed during a stay of AMF as a Visiting Chair Professor at the Department of Mathematics of the Shanghai Jiao Tong University (SJTU), China. He acknowledges the hospitality of the hosting department and of the SJTU.

Finally, the authors are grateful to S.~van Haver, who drew our attention to the most recent developments in the ENZ theory, and to the anonymous referees whose remarks helped to improve the original version of the manuscript.

\section*{References}


\end{document}